\newcommand{\Z}{\mathbb{Z}}
\newcommand{\N}{\mathbb{N}}
\newcommand{\B}{\mathcal{B}}
\newcommand{\Pa}{\mathcal{P}}
\newcommand{\INF}{{}^\infty}
\newcommand{\hole}{{\diamond}}
\newtheorem{theorem}{Theorem}
\newtheorem{lemma}{Lemma}
\newtheorem{proposition}{Proposition}
\newtheorem{corollary}{Corollary}
\theoremstyle{definition}
\newtheorem{definition}{Definition}
\newtheorem{problem}{Problem}
\newtheorem{example}{Example}
\theoremstyle{remark}
\newtheorem{claim}{Claim}
\begin{document}

\title{Quantifier Extensions of Multidimensional Sofic Shifts}


\author{Ilkka T\"orm\"a}
\address{TUCS -- Turku Centre for Computer Science \\ University of Turku, Finland}
\email{iatorm@utu.fi}
\thanks{Research supported by the Academy of Finland Grant 131558}

\subjclass[2010]{Primary 37B50}

\date{\today}


\commby{Nimish Shah}

\begin{abstract}
We define a pair of simple combinatorial operations on subshifts, called existential and universal extensions, and study their basic properties. We prove that the existential extension of a sofic shift by another sofic shift is always sofic, and the same holds for the universal extension in one dimension. However, we also show by a construction that universal extensions of two-dimensional sofic shifts may not be sofic, even if the subshift we extend by is very simple.
\end{abstract}

\maketitle

\section{Introduction}

Subshifts of finite type and sofic shifts are, in some sense, the simplest objects studied in symbolic dynamics. Subshifts of finite type are symbolic systems whose structure is defined by local interactions, and sofic shifts are obtained from them by forgetting some of their structure. One-dimensional SFTs and sofic shifts are quite well-behaved and their theory is well understood, although some fundamental open problems still remain. For a good overview of the theory, see \cite{LiMa95}. In particular, since one-dimensional sofic shifts correspond to regular languages, most natural (and some unnatural) operations respect the property of being sofic.

The higher-dimensional case is much more complex, and the language of a two-dimensional SFT can already be uncomputable. In fact, even the emptiness problem of two-dimensional SFTs is undecidable \cite{Be66}. The class of two-dimensional sofic shifts is badly understood, in the sense that there are only a couple of general conditions for showing that a particular subshift is not sofic, and not many more specific ones \cite{De06,DuRoSh10,KaMa13,Pa13}. In this article, we study a pair of particular operations on subshifts, called the \emph{quantifier extensions}, which `extend' a given subshift by another subshift. They both respect the property of being a sofic shift in one dimension, and are inspired by the concept of \emph{multi-choice shift spaces}, as defined in \cite{LoMaPa13} and studied further in \cite{MePa11}. The main theorem shows that one of the operations does not take sofic shifts to sofic shifts in the two-dimensional setting, even when extending by very simple subshifts. This also solves an open problem presented in \cite{LoMaPa13}.

\section{Definitions}
\label{sec:Defs}

Let $S$ be a finite set, called the \emph{alphabet}, and fix a dimension $d \geq 1$. The set of finite \emph{words} over $S$ is denoted $S^*$, and the length of a word $w \in S^*$ by $|w|$. A \emph{$d$-dimensional pattern over $S$} is a function $P : D \to S$, where $D \subset \Z^d$ is the \emph{domain} of $P$, denoted $D(P)$. The set of finite $d$-dimensional patterns over $S$ is denoted $\Pa_d(S)$. A full pattern $x : \Z^d \to S$ is called a \emph{configuration}. A pattern $P : D \to S$ \emph{occurs} in another pattern $Q : E \to S$, denoted $P \sqsubset Q$, if $\vec v + \vec w \in E$ and $Q_{\vec v + \vec w} = P_{\vec v}$ hold for some $\vec w \in \Z^d$ and all $\vec v \in D$.

Each set of finite patterns $F \subset \Pa_d(S)$ defines a \emph{subshift} as the set of configurations $X = \{ x \in S^{\Z^d} \;|\; \forall P \in F : P \not\sqsubset x \}$. We say $F$ is a \emph{set of forbidden patterns} for $X$. If $F$ is finite, $X$ is a \emph{subshift of finite type}, SFT for short, and if every pattern of $F$ has domain $\{\vec 0, \vec e_i\}$ for some $i \in [1,d]$, where $\{\vec e_1, \ldots, \vec e_d\}$ is the natural generating set of $\Z^d$, $X$ is a \emph{tiling system}. The set $\{ P \in \Pa_d(S) \;|\; \exists x \in X : P \sqsubset x \}$ is called the \emph{language} of $X$, and is denoted $\B(X)$. For $D \subset \Z^d$, we also denote $\B_D(X) = \B(X) \cap S^D$, and if $d = 1$ and $n \in \N$, we denote $\B_n(X) = \B(X) \cap S^n$. A subshift $X \subset S^{\Z^d}$ is \emph{strongly irreducible (with constant $M > 0$)} if for any two domains $D, D' \subset \Z^d$ such that $\min \{ \| \vec n - \vec m \| \;|\; \vec n \in D, \vec m \in D' \} \geq M$ where $\| \cdot \|$ denotes the maximum norm, and any two patterns $p \in \B_D(X)$ and $p' \in \B_{D'}(X)$, there exists $x \in X$ with $x_D = p$ and $x_{D'} = p'$.

A function $\phi : X \to Y$ between two subshifts $X \subset S^{\Z^d}$ and $Y \subset T^{\Z^d}$ is called a \emph{block map} if there exists a \emph{local function} $\Phi : \B_D(X) \to T$, where $D \subset \Z^d$ is finite, such that $\phi(x)_{\vec v} = \Phi(x_{D + \vec v})$ for all $x \in X$ and $\vec v \in \Z^d$. If $D = \{\vec 0\}$, then $\phi$ is a \emph{symbol map}. The image of an SFT under a block map is called a \emph{sofic shift}. All sofic shifts also occur as images of tiling systems under symbol maps.

We recall the \emph{arithmetical hierarchy}, which classifies certain subsets of $\N$ according to their complexity. A first-order arithmetical formula with bounded quantifiers is classified as $\Pi^0_0$ and $\Sigma^0_0$. If a formula $\phi$ with free variable $k$ is $\Pi^0_n$, then $\exists k : \phi$ is $\Sigma^0_{n+1}$. Similarly, if $\phi$ is $\Sigma^0_n$, then $\forall k : \phi$ is $\Pi^0_{n+1}$. Every first-order arithmetical formula is equivalent to a $\Pi^0_n$ or $\Sigma^0_n$ one, for some $n \in \N$. If $\phi$ has a unique free variable $k$, then the set $N_\phi = \{ k \in \N \;|\; \phi(k) \}$ is given the same classification as $\phi$. Note that $N_\phi$ is $\Pi^0_n$ if and only if $\N \setminus N_\phi$ is $\Sigma^0_n$. A set $N \subset \N$ is $C$-\emph{hard} for a class $C$, if for every $M \in C$ there is a computable function $f : \N \to \N$ with $M = f^{-1}(N)$, and $C$-\emph{complete} if also $N \in C$. The class $\Sigma^0_1$ ($\Pi^0_1$) contains exactly the (co-)recursively enumerable sets. When classifying subsets of other countable sets than $\N$, for example $\Pa_d(S)$, we implicitly choose a computable bijection to $\N$. A subshift is given the same classification as its language.


\section{The Quantifier Extensions}

We begin by defining our objects of interest, the quantifier extension subshifts.

\begin{definition}
Let $X, Y \subset S^{\Z^d}$ be $d$-dimensional subshifts, let $\hole \notin S$ be a new symbol and denote $\hat S = S \cup \{\hole\}$. For patterns $P \in \Pa_d(\hat S)$ and $Q \in S^{D(P)}$, denote by $P^{(Q)} \in S^{D(P)}$ the pattern with
\[ P^{(Q)}_{\vec v} = \left\{ \begin{array}{ll}
	Q_{\vec v}, & \mbox{if~} P_{\vec v} = \hole \\
	P_{\vec v}, & \mbox{otherwise.}
\end{array} \right. \]
We define two \emph{quantifier extension subshifts} for $X$ and $Y$, the \emph{universal extension} $A(X,Y) \subset \hat S^{\Z^d}$ and the \emph{existential extension} $E(X,Y) \subset \hat S^{\Z^d}$, with the respective sets of forbidden patterns
\begin{align*}
\{ P \in \Pa_d(S) \;|\; \exists Q \in \B_{D(P)}(Y) : P^{(Q)} \notin \B(X) \} & \mbox{~and} \\
\{ P \in \Pa_d(S) \;|\; \forall Q \in \B_{D(P)}(Y) : P^{(Q)} \notin \B(X) \}
\end{align*}
\end{definition}

The symbol $\hole$ represents a `hole' in a configuration $x \in \hat S^{\Z^d}$ that can be filled with the contents of another configuration $y \in Y$. The extensions quantify over all such $y$ to decide whether $x$ is valid. Our perspective in this paper is to study which properties the extensions respect. 
We mainly focus on the universal extension, as the question of soficness is particularly interesting for it. First, we show that the universal extension respects the property of being an SFT in the following sense.

\begin{proposition}
\label{prop:SFTs}
Let $X \subset S^{\Z^d}$ be an SFT and $Y \subset S^{\Z^d}$ any subshift. Then the universal extension $A(X,Y)$ is an SFT.
\end{proposition}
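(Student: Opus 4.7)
The plan is to show that $A(X,Y)$ is defined by a finite set of forbidden patterns, by exploiting the fact that invalidity in $X$ is witnessed locally.

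First, I would fix a finite set $F$ of forbidden patterns for $X$, all having a common finite domain $D_0$ (for instance a cube $[-r,r]^d$); this is possible because $X$ is an SFT. Then I would define the candidate finite set of forbidden patterns for $A(X,Y)$ to be
\[ F' = \{\, R \in \hat S^{D_0} \;|\; \exists Q \in \B_{D_0}(Y) : R^{(Q)} \in F \,\}. \]
Note that $F'$ is finite because $\hat S^{D_0}$ is finite. Let $A'$ be the SFT defined by $F'$. The goal is to show $A(X,Y) = A'$.

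For the inclusion $A(X,Y) \subseteq A'$, I would verify that every $R \in F'$ is already forbidden by the definition of $A(X,Y)$: if $R^{(Q)} \in F$ for some $Q \in \B_{D_0}(Y)$, then $R^{(Q)}$ contains a forbidden pattern of $X$ (namely itself), so $R^{(Q)} \notin \B(X)$, which means $R$ is forbidden for $A(X,Y)$ with witness $Q$.

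For the converse $A' \subseteq A(X,Y)$, I would argue by contrapositive: suppose $x \in \hat S^{\Z^d}$ violates the definition of $A(X,Y)$, so there is a pattern $P \sqsubset x$ and $Q \in \B_{D(P)}(Y)$ with $P^{(Q)} \notin \B(X)$. Since $F$ is a set of forbidden patterns for $X$ with common domain $D_0$, some translate $D_0 + \vec w \subseteq D(P)$ carries a pattern of $F$ inside $P^{(Q)}$. Restricting gives $R = P|_{D_0+\vec w}$ and $Q' = Q|_{D_0+\vec w}$, where $Q' \in \B_{D_0}(Y)$ because subshift languages are closed under taking subpatterns, and $R^{(Q')}$ equals the relevant translate of an element of $F$. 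Hence (up to translation) $R \in F'$ occurs in $x$, so $x \notin A'$.

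The main conceptual step is the observation that the existential quantifier over $Q \in \B_{D(P)}(Y)$ in the definition of $A(X,Y)$ can be \emph{localized} to a quantifier over $Q' \in \B_{D_0}(Y)$ on the window where the $X$-violation occurs; the rest of $Q$ is irrelevant. This localization relies crucially on two facts: that $X$ is an SFT (so invalidity is witnessed on a bounded window $D_0$) and that $Y$ is a subshift (so restrictions of legal patterns are legal, allowing us to pass from $Q$ down to $Q'$ and, conversely, if needed, to ignore what happens outside the window). No such obstacle truly arises, which is why the statement is comparatively easy; the only care needed is to keep track of translations when saying that a forbidden pattern occurs.
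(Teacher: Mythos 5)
Your candidate forbidden set is essentially the paper's $\hat F$ (normalizing all domains to a cube $D_0$ is harmless), and the inclusion $A(X,Y) \subseteq A'$ is argued correctly. The gap is in the converse inclusion, at the step ``since $F$ is a set of forbidden patterns for $X$ with common domain $D_0$, some translate $D_0 + \vec w \subseteq D(P)$ carries a pattern of $F$ inside $P^{(Q)}$.'' This conflates $P^{(Q)} \notin \B(X)$ with ``$P^{(Q)}$ contains a pattern of $F$.'' For an SFT these are not equivalent at the level of finite patterns: a pattern can avoid every element of $F$ (be locally admissible) and still fail to lie in $\B(X)$ because it cannot be extended to a configuration avoiding $F$. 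Concretely, with $d=1$, $S=\{0,1\}$ and $F=\{10,11\}$ (so $X = \{{}^\infty 0^\infty\}$), the pattern $01$ is not in $\B(X)$ yet contains no element of $F$; so the witness $(P,Q)$ you start from need not exhibit any $F$-pattern inside $D(P)$ at all.

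The repair is exactly the move the paper makes: do not localize too early. Extend the witness $Q \in \B_{D(P)}(Y)$ to a full configuration $y \in Y$ (possible since $Y$ is a subshift), and form the configuration $x^{(y)}$. Its restriction to the occurrence of $P$ is $P^{(Q)} \notin \B(X)$, so $x^{(y)} \notin X$; now, because $x^{(y)}$ is a \emph{configuration}, membership in the SFT $X$ really is equivalent to avoiding $F$, so some translate $D_0 + \vec w$ (possibly lying entirely outside $D(P)$) satisfies $x^{(y)}_{D_0+\vec w} \in F$. Restricting $x$ and $y$ to that window then produces an element of your $F'$ occurring in $x$, giving $x \notin A'$. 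So your ``localization'' of the existential quantifier is correct in spirit, but the window on which it can be carried out is determined only after passing through the configuration $x^{(y)}$, not by the original pattern $P$.
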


\begin{proof}
Let $F \subset \Pa_d(S)$ be a finite set of forbidden patterns for $X$. We claim that $\hat F = \{ P \in \Pa_d(\hat S) \;|\; \exists Q \in \B_{D(P)}(Y) : P^{(Q)} \in F \}$ is a set of forbidden patterns for $A(X,Y)$. Let $x \in \hat S^{\Z^d}$ be arbitrary. If $x \in A(X,Y)$, it is clear that no patterns of $\hat F$ occur in $x$. Conversely, if $x \notin A(X,Y)$, then there exists a configuration $y \in Y$ such that $x^{(y)} \notin X$, or in other words, there exists $D \subset \Z^2$ such that $x^{(y)}_D \in F$. This implies that the pattern $P = x_D$ is in $\hat F$, since the pattern $Q = y_D \in \B_D(Y)$ satisfies $P^{(Q)} \in F$. Since $\hat F$ is finite, $A(X,Y)$ is an SFT, as claimed.
\end{proof}

\begin{example}
\label{ex:1DSFT}
The above result does not hold for the existential extension, even if $Y = \{0,1\}^\Z$ and $X \subset \{0,1\}^\Z$ is mixing. Namely, let $X$ be defined by the single forbidden pattern $0100$, and consider the configurations $x = \INF 0 1 (0 \hole) \INF$ and $x' = \INF (0 \hole) 0 \INF$. We have $x \in E(X,Y)$ by substituting $1$ to every $\hole$, and $x' \in E(X,Y)$ by substituting $0$. It is easy to see that these are the only possible substitutions, and thus $\INF 0 1 (0 \hole)^n 0 \INF \notin E(X,Y)$ for all $n \geq 1$. Thus $E(X,Y)$ is not an SFT.
\end{example}

For soficness, we can say the following.

\begin{proposition}
\label{prop:Soficness}
Let $X, Y \subset S^{\Z^d}$ be sofic shifts. Then the existential extension $E(X,Y)$ is sofic. If $d = 1$, then the universal extension $A(X,Y)$ is also sofic.
\end{proposition}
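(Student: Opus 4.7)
I would handle the two claims separately, by quite different methods.

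For $E(X,Y)$ (in any dimension) the plan is a direct SFT-cover construction. Fix tiling-system covers $\tilde X \subset T^{\Z^d}$ of $X$ and $\tilde Y \subset U^{\Z^d}$ of $Y$ with symbol maps $\sigma_X \colon T \to S$, $\sigma_Y \colon U \to S$, and consider the product shift $Z \subset (\hat S \times T \times U)^{\Z^d}$ of those triples $(x,t,u)$ satisfying $t \in \tilde X$, $u \in \tilde Y$, and at every position $\vec v$: $\sigma_X(t_{\vec v}) = x_{\vec v}$ if $x_{\vec v} \neq \hole$, and $\sigma_X(t_{\vec v}) = \sigma_Y(u_{\vec v})$ otherwise. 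Every constraint is local, so $Z$ is an SFT, and projecting onto the $\hat S$-coordinate is a symbol map whose image is exactly $E(X,Y)$. This settles the existential case with no restriction on the dimension.

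For $A(X,Y)$ with $d = 1$ I would invoke the characterization that a one-dimensional subshift is sofic if and only if its language of factors is regular. Set $L_X = \B(X)$ and $L_Y = \B(Y)$, both regular, and define
\[
L = \bigl\{ w \in \hat S^* \;\bigm|\; \forall v \in \B_{|w|}(Y) \colon w^{(v)} \in L_X \bigr\}.
\]
Viewing aligned pairs of equal-length words as words over $\hat S \times S$, the condition ``$v \in L_Y$'' cuts out a regular subset, and the letter-to-letter rule $(a,b) \mapsto a$ if $a \neq \hole$ else $b$ is a symbol map pulling $L_X$ back to a regular set. Hence
\[
L = \hat S^* \setminus \pi_1\bigl( \{(w,v) : v \in L_Y\} \cap \{(w,v) : w^{(v)} \notin L_X\} \bigr)
\]
is obtained from regular languages by complement, intersection and length-preserving projection, all of which preserve regularity, so $L$ itself is regular. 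I would then verify that $L$ is factor-closed (using that $\B(Y)$ is factor-closed and that any factor of $Y$ extends within $Y$). By the very definition of the universal extension, the forbidden patterns of $A(X,Y)$ are exactly $\Pa_1(\hat S) \setminus L$, so $A(X,Y) = \{x \in \hat S^\Z : \B(x) \subset L\}$, and its factor language is the set of words in $L$ that are infinitely prolongable in $L$ on both sides, which is regular by a standard DFA-reachability argument. This exhibits $\B(A(X,Y))$ as regular, hence $A(X,Y)$ as sofic.

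\textbf{Main obstacle.} The whole $A$-case rests on the regularity of $L$, which encodes a universal quantifier over equal-length fillings as the complement of an existential projection. Both operations preserve regularity in one dimension, but the analogous closure fails in two dimensions; that failure is essentially what the paper's main non-soficness theorem will exploit, so the one-dimensional hypothesis is used here in a non-removable way.
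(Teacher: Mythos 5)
Your proposal is correct and follows essentially the same route as the paper: the existential case via a product of SFT covers with a hole-mask and a projection, and the one-dimensional universal case via closure of regular languages under intersection, length-preserving projection, and complementation applied to the forbidden set. The only cosmetic difference is that the paper directly cites that a regular forbidden set yields a sofic shift, whereas you re-derive this through factor-closure and bi-prolongability.
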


\begin{proof}
Let $\phi : X' \to X$ and $\psi : Y' \to Y$ be surjective block maps, where $X'$ and $Y'$ are SFTs, and denote $Z = \{\#,\hole\}^{\Z^d}$. Then $E(X,Y)$ is obtained from the SFT
\[ \{ (x,y,z) \;|\; \forall \vec v \in \Z^d: z_{\vec v} = \hole \implies x_{\vec v} = y_{\vec v} \} \subset X' \times Y' \times Z, \]
by applying the block map $\xi$ defined by
\[ \xi(x,y,z)_{\vec 0} = \left\{ \begin{array}{ll}
	\hole, & \mbox{if~} z_{\vec 0} = \hole \\
	\phi(x)_{\vec 0}, & \mbox{otherwise.}
\end{array} \right. \]

In the case $d = 1$, a subshift is sofic if and only if its language is regular, if and only if it can be defined by a regular forbidden set \cite[Chapter 3]{LiMa95}. The sets $L = \Pa_1(S) \setminus \B(X)$ and $\B(Y)$ are thus regular. Now, $A(X,Y)$ is defined by the set
\[ \bigcup_{n \in \N} \{ w \in \hat S^n \;|\; \exists v \in \B_n(Y) : w^{(v)} \in L \} \]
of forbidden words, which is clearly regular. Thus the extension is sofic.
\end{proof}

In higher dimensions, automata theory is replaced by computability theory. In particular, languages of multidimensional sofic shifts are co-recursively enumerable.

\begin{lemma}
\label{lem:Languages}
The language of every multidimensional sofic shift is $\Pi^0_1$.
\end{lemma}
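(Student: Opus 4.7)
The plan is to exploit the characterization, noted earlier in the excerpt, that every sofic shift $X \subset T^{\Z^d}$ is the image of a tiling system $X' \subset S^{\Z^d}$ under a symbol map $\sigma : S \to T$. Let $F \subset \Pa_d(S)$ be the finite set of forbidden two-point patterns defining $X'$. Then $\B(X) = \{ P : D \to T \mid \exists Q \in \B_D(X') : \sigma \circ Q = P \}$, so it suffices to show that $\B(X')$ is $\Pi^0_1$ and then transfer this to $\B(X)$ using a finite bounded quantification.

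First I would show that $\B(X')$ is $\Pi^0_1$. By a standard compactness (K\"onig's lemma) argument, a finite pattern $Q : D \to S$ lies in $\B(X')$ if and only if for every $n \in \N$ with $D \subset [-n,n]^d$, there exists an extension $R : [-n,n]^d \to S$ with $R|_D = Q$ such that no element of $F$ occurs in $R$. The negation of this is $\exists n\,\forall R : [-n,n]^d \to S$, $R|_D \neq Q$ or some forbidden pattern occurs in $R$. The inner universal quantifier ranges over a finite set and the condition it checks is decidable, so the whole statement is $\Sigma^0_1$. Hence $\B(X')$ is $\Pi^0_1$.

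Next I would lift this to $\B(X)$. For any $P : D \to T$, the set $\sigma^{-1}(P) \cap S^D$ is finite and computable from $P$. Therefore $P \notin \B(X)$ is equivalent to the finite conjunction $\bigwedge_{Q \in \sigma^{-1}(P) \cap S^D} Q \notin \B(X')$, which is a finite conjunction of $\Sigma^0_1$ conditions and hence $\Sigma^0_1$. Consequently $\B(X)$ is $\Pi^0_1$.

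The only genuinely non-syntactic step is the compactness argument underlying the first part, but it is standard: the key point is that the set of locally valid patterns on $[-n,n]^d$ is finite for each $n$ and forms an inverse system whose inverse limit is $X'$, so having an extension in every box is equivalent to having a full configuration in $X'$. The remainder is a straightforward counting of quantifier alternations.
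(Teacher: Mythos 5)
Your proof is correct and follows essentially the same route as the paper: the paper writes the single formula $\forall m\, \exists Q \in R^{[-m,n+m-1]^d} : \phi(Q)_{[0,n-1]^d} = P \wedge (\forall T \in F : T \not\sqsubset Q)$, which packages your two steps (compactness for the covering SFT, plus the finite preimage search under the symbol map) into one $\Pi^0_1$ statement. Your decomposition into ``SFT languages are $\Pi^0_1$'' followed by a finite conjunction over $\sigma$-preimages is just a modular presentation of the same argument, and the quantifier counting in both steps is sound.
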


\begin{proof}
Let $X \subset S^{\Z^d}$ be a sofic shift, let $F \subset \Pa_d(R)$ be a finite set of forbidden patterns defining an SFT $Y \subset R^{\Z^d}$, and let $\phi : Y \to X$ be a surjective symbol map. For a hypercube pattern $P \in S^{[0,n-1]^d}$, we have $P \in \B(X)$ if and only if
\[ \forall m \in \N: \exists Q \in R^{[-m,n+m-1]^d}: \phi(Q)_{[0,n-1]^d} = P \wedge (\forall T \in F : T \not\sqsubset Q). \]
This formula is $\Pi^0_1$ by form, and so the language of $X$ is $\Pi^0_1$.
\end{proof}

The latter statement of Proposition~\ref{prop:Soficness} translates to the following result.

\begin{lemma}
\label{lem:Computability}
Let $X, Y \subset S^{\Z^d}$ be $\Pi^0_1$ subshifts. Then the universal extension $A(X,Y)$ is $\Pi^0_2$. If $Y$ is also $\Sigma^0_1$, then $A(X,Y)$ is $\Pi^0_1$.
\end{lemma}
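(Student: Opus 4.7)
The plan is to characterize $\B(A(X,Y))$ via compactness and then read off the arithmetical complexity. First, mimicking the compactness step in the proof of Lemma~\ref{lem:Languages}, I would show that a pattern $P$ lies in $\B(A(X,Y))$ iff for every $m \in \N$ there exists $Q \in \hat S^D$ on a hypercube $D$ containing $D(P)$ with margin $m$, extending $P$, such that every sub-pattern of $Q$ is ``legal'' for $A(X,Y)$: for every $E \subseteq D$ and every $R \in \B_E(Y)$, one has $(Q|_E)^{(R)} \in \B(X)$. The forward direction takes $Q = x|_D$ for some $x \in A(X,Y)$ extending $P$, and the backward direction is the usual compactness argument in $\hat S^{\Z^d}$. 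Dualizing, $P \notin \B(A(X,Y))$ iff there exists $m$ such that for every such $Q$ there are $E$ and $R$ with $R \in \B_E(Y)$ and $(Q|_E)^{(R)} \notin \B(X)$.

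I would then count quantifiers in this complement statement. Since $Y$ is $\Pi^0_1$, the predicate ``$R \in \B_E(Y)$'' has the form $\forall k : \alpha(R,E,k)$, and since $X$ is $\Pi^0_1$, the predicate ``$(Q|_E)^{(R)} \notin \B(X)$'' has the form $\exists \ell : \gamma(Q,E,R,\ell)$ with $\alpha,\gamma$ decidable. The standard identity $(\forall k\, \alpha) \wedge (\exists \ell\, \gamma) \equiv \exists \ell\, \forall k\, (\alpha \wedge \gamma)$ turns the inner conjunction into a $\Sigma^0_2$ predicate. The bounded existentials $\exists E, \exists R$ merge into the $\exists \ell$-block, the bounded universal $\forall Q$ (finite once $m$ is fixed) expands as a conjunction whose inner existentials are pulled to the front as a tuple indexed by $Q$, and finally the outer $\exists m$ is absorbed into that same existential block. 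The complement thus stays $\Sigma^0_2$, so $A(X,Y) \in \Pi^0_2$.

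When $Y$ is also $\Sigma^0_1$, its language is decidable, so ``$R \in \B_E(Y)$'' becomes recursive and drops out of the alternation; the same bookkeeping then yields a $\Sigma^0_1$ complement, giving $A(X,Y) \in \Pi^0_1$. The only delicate point I expect is the quantifier-merging step: the bounded universal $\forall Q$ must be rewritten as a finite conjunction and its inner existentials pulled to the front in tuple form, so that no spurious $\forall\exists$ alternation appears. Everything else is routine given the definition of $A(X,Y)$ and the computability of the substitution operation $(P,R) \mapsto P^{(R)}$.
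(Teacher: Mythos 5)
Your proof is correct and ends with the same quantifier count as the paper's, but it gets there by a genuinely different (and in fact more careful) route. The paper's proof has no compactness step at all: it asserts that $P \notin \B(A(X,Y))$ if and only if $\exists Q \in \B_{D(P)}(Y) : P^{(Q)} \notin \B(X)$ --- that is, a pattern fails to be in the language exactly when it is itself a forbidden pattern --- and then reads off $\Sigma^0_2$ (resp.\ $\Sigma^0_1$ when $\B(Y)$ is decidable) for the complement from that single formula, with no outer $\exists m$, no quantification over extensions $Q$, and no tupling of witnesses. That is much shorter, but only the implication ``forbidden $\Rightarrow$ not in the language'' is immediate; the converse fails in general for this forbidden set, and the paper itself trades on exactly this distinction in Proposition~\ref{prop:CompuNonsofic}, where $w(k)$ is excluded from $\B(A(X,Y))$ not by being forbidden but because every extension of it contains a forbidden word. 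Your version --- characterizing $\B(A(X,Y))$ by the compactness condition over all cube extensions, in the same spirit as the proof of Lemma~\ref{lem:Languages}, and then dualizing --- supplies the justification that the paper's opening equivalence glosses over. The price is the extra bookkeeping you flag yourself: the bounded $\forall Q$ must be expanded into a finite conjunction with its inner existentials pulled out as a tuple, the bounded $\exists E, \exists R$ merged into the unbounded existential block, and the outer $\exists m$ absorbed; you handle all of these correctly, and they work at both levels of the hierarchy. In short, the core complexity computation is the same, but your added compactness layer is not redundant padding --- it is what makes the upper bound airtight.
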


\begin{proof}
Let $P \in \hat S^{n \times n}$ be arbitrary. We have $P \notin \B(A(X,Y))$ if and only if
\[ \exists Q \in S^{n \times n}: Q \in \B(Y) \wedge P^{(Q)} \notin \B(X). \]
Since the languages of $X$ and $Y$ are $\Pi^0_1$, the proposition $Q \in \B(Y)$ is $\Pi^0_1$, while $P^{(Q)} \notin \B(X)$ is $\Sigma^0_1$. Thus the complement of $\B(A(X,Y))$ is a $\Sigma^0_2$ language, implying that $\B(A(X,Y))$ is $\Pi^0_2$. The latter claim follows similarly.
\end{proof}

This bound is sharp, and we use the following powerful result to prove it.

\begin{theorem}[\cite{DuRoSh10,AuSa13}]
\label{thm:Pi01}
Let $X \in S^\Z$ be a $\Pi^0_1$ subshift. Then the vertically periodic two-dimensional subshift $\{ y \in S^{\Z^2} \;|\; \exists x \in X: \forall (i,j) \in \Z^2: y_{(i,j)} = x_i \}$ is sofic.
\end{theorem}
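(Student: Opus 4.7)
The plan is to realize the vertically periodic lift as a symbol-map image of a two-dimensional SFT, following the simulation-theoretic approach of Aubrun–Sablik and Durand–Romashchenko–Shen. The key insight is that the subshift in the statement is characterized by two constraints of very different natures: a purely local one (each column is constant) and a global effectively closed one (the resulting row word lies in $X$). The former is trivially SFT, so the task reduces to engineering an auxiliary SFT layer that enforces the latter.

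Since $X$ is $\Pi^0_1$, fix a Turing machine $M$ that, on input $w \in S^*$, halts iff $w \notin \B(X)$. Then I would construct an auxiliary SFT on $\Z^2$ with three superimposed layers. The \emph{symbol layer} takes values in $S$ and is forced by nearest-neighbor rules to be vertically constant. The \emph{substitution layer} is a Robinson-type SFT producing a self-similar hierarchy of nested squares of sizes $k^n$ with the property that every finite horizontal interval is eventually enclosed by arbitrarily large squares. The \emph{computation layer} allocates, inside every square of side $N$, a region used as a Turing tape pre-loaded with the horizontal word of length $N$ read off the symbol layer along the central row of that square, and simulates $M$ on this input for $f(N)$ steps; the local rules produce a tiling error whenever this simulation halts.

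For soundness, any valid configuration projects through the symbol layer to a vertically constant $y$ whose column word $x \in S^\Z$ has every finite factor tested by $M$ inside arbitrarily large squares for arbitrarily many steps without halting, hence $x \in X$. For completeness, any $x \in X$ lifts: place the corresponding vertically constant configuration on the symbol layer and choose the hierarchical and computation layers so that all simulations of $M$ simply run forever. The symbol map forgetting the auxiliary layers then realizes the target subshift as a sofic shift.

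The main obstacle is the coexistence of the vertically periodic symbol layer with a manifestly non-periodic hierarchical/computation layer. The substitution layer's local rules must be "transparent" to the symbol alphabet, meaning that a single symbol of $S$ can appear under every type of Robinson tile; conversely, the computation layer must read symbol-layer data horizontally and route it onto simulation tapes without ever breaking vertical constancy above and below. Arranging the synchronization between square sizes, readout positions, and allotted computation times so that every finite factor of $x$ is examined unboundedly often is the delicate engineering performed in the cited works, and it is this hierarchical bookkeeping rather than the simulation of $M$ itself that constitutes the real technical core.
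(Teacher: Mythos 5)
The paper does not prove Theorem~\ref{thm:Pi01} at all---it is imported by citation from \cite{DuRoSh10,AuSa13}---and your sketch is a faithful reconstruction of exactly the strategy of those works: a vertically constant symbol layer, a hierarchical (substitution/fixed-point) layer allocating nested computation zones, and a simulation layer running the enumerator of forbidden words, with the symbol map erasing the auxiliary layers. Your account is correct as a plan and rightly locates the real difficulty in the bookkeeping (in particular, the plain Robinson tiling admits degenerate fault-line configurations in which not every cell is enclosed by arbitrarily large squares, and the per-column information density of the readout must stay bounded---both issues the cited constructions are specifically engineered to overcome), but since you defer that engineering to the references, this remains a sketch rather than a self-contained proof.
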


\begin{proposition}
\label{prop:CompuNonsofic}
There exist countable sofic shifts $X, Y \subset S^{\Z^2}$ such that the language of the universal extension $A(X,Y)$ is $\Pi^0_2$-complete.
\end{proposition}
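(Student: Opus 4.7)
The upper bound that $\B(A(X,Y))$ lies in $\Pi^0_2$ is Lemma~\ref{lem:Computability}, so the task reduces to proving $\Pi^0_2$-hardness. The plan is to reduce the $\Pi^0_2$-complete set $\mathrm{TOT} = \{e \in \N : \phi_e \text{ is total}\}$ to $\B(A(X,Y))$ via a computable map $e \mapsto P_e$, for a single pair of countable sofic shifts $X$ and $Y$ that do not depend on $e$.

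The construction of $X$ will rely on Theorem~\ref{thm:Pi01}. I would define a one-dimensional $\Pi^0_1$ subshift $X_0 \subset S^\Z$ whose forbidden words encode those pairs $(e,n)$ with $\phi_e(n) \uparrow$, for example in a uniquely parsable format like $\#[e]\#[n]\#$. The forbidden set is $\Pi^0_1$ as the complement of an r.e.\ set of halting instances. By Theorem~\ref{thm:Pi01} the vertical lift $X = \{y : \exists x \in X_0,\ \forall (i,j)\ y_{(i,j)}=x_i\}$ is 2D sofic, and it can be arranged to be countable by restricting $X_0$ to configurations with at most one encoded pair surrounded by blanks. The shift $Y$ is taken to be a simple countable sofic, such as a 2D sunny-side-up, so that patterns $Q \in \B(Y)$ each encode a natural number via the position of a single distinguished symbol.

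For each $e$, I would design $P_e$ to contain an encoding block for $e$ followed by a designated region of holes. The goal is to establish $P_e \in \B(A(X,Y))$ iff $e \in \mathrm{TOT}$. If $\phi_e$ is total, no word $\#[e]\#[n]\#$ is ever forbidden, so $P_e$ extends to a configuration in $A(X,Y)$ by arranging hole cells that accommodate every $Q$; if some $\phi_e(n_0)\uparrow$, then for every extension of $P_e$ a suitable $Q \in \B(Y)$ encoding $n_0$ forces $P_e^{(Q)}$ to contain the forbidden word $\#[e]\#[n_0]\#$, preventing membership. The unboundedness of $n$ is produced by the compactness-based requirement that $P_e$ extend to valid patterns of arbitrarily large size $N$; in such an extension the hole region can be made wide enough to encode any prescribed $n$.

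The main obstacle will be guarding against a collapse to one dimension. When $X$ is a vertical lift and $Y$ produces only column-constant fillings, the universal extension reduces to a one-dimensional universal extension on the common row, which is sofic by Proposition~\ref{prop:Soficness} and therefore only $\Pi^0_1$. The genuine $\Pi^0_2$-hardness has to come from the interplay of the bounded universal quantifier $\forall Q$ with the unbounded compactness quantifier $\forall N$ hidden in $P_e \in \B(A(X,Y))$, and the subtle task is to design the hole layout of $P_e$, the separator structure of $X_0$, and the marker structure of $Y$ so that increasing $N$ genuinely introduces new values of $n$ into the forbidden-word test, rather than folding into a one-dimensional regular condition.
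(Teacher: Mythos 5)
Your overall frame---upper bound from Lemma~\ref{lem:Computability}, hardness by a reduction, lifting one-dimensional $\Pi^0_1$ subshifts to countable two-dimensional sofic shifts via Theorem~\ref{thm:Pi01}---matches the paper, but the core of your hardness argument has two gaps, and the second one is fatal to the whole strategy rather than just a missing detail. First, your $X_0$ is not a $\Pi^0_1$ subshift. You forbid the words coding pairs $(e,n)$ with $\phi_e(n)$ divergent; that set of words is co-recursively enumerable, not recursively enumerable, and a subshift has $\Pi^0_1$ language exactly when it admits a $\Sigma^0_1$ (r.e.) set of forbidden words. With your definition, $\#[e]\#[n]\#$ lies in $\B(X_0)$ essentially iff $\phi_e(n)$ halts, so $\B(X_0)$ is $\Sigma^0_1$-hard, Theorem~\ref{thm:Pi01} does not apply, and by Lemma~\ref{lem:Languages} the vertical lift cannot be sofic at all.

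Second, and more fundamentally, you take $Y$ to be a recursive shift (a sunny-side-up or similar marker shift). Lemma~\ref{lem:Computability} states that if $X$ is $\Pi^0_1$ --- which it must be if it is to be sofic --- and $Y$ is both $\Pi^0_1$ and $\Sigma^0_1$, then $\B(A(X,Y))$ is $\Pi^0_1$, hence not $\Pi^0_2$-complete. So no amount of care with the hole layout, the separators, or the ``compactness quantifier $\forall N$'' can produce $\Pi^0_2$-hardness against a recursive $Y$; the obstruction is recursion-theoretic, not geometric, and your worry about collapse to one dimension is beside the point. The paper does the opposite of what you propose: $X$ is recursive, and the complexity is placed in $Y$, whose forbidden words $2\,3^a 4^b 5$ for $\exists n\,\neg\Phi(a,b,n)$ form an r.e.\ set, so that $\B(Y)$ is properly $\Pi^0_1$ but not recursive. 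The second quantifier alternation then comes from the clause ``$Q \in \B(Y)$'' sitting inside the universal quantification over fillings: an infinite tail of holes after $0 1^k 2$ lets the candidate fillings range over all $m$ (the outer $\forall$), while deciding whether a given filling actually belongs to $\B(Y)$ supplies the inner $\exists n$. If you want to repair your write-up, keep your reduction target but swap the roles of the two shifts: encode the $\Sigma^0_1$ inner predicate into the forbidden words of $Y$ and let $X$ merely set up the geometry that forces the holes to be tested against $\B(Y)$.
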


\begin{proof}
Because of Lemma~\ref{lem:Computability} and Theorem~\ref{thm:Pi01}, it suffices to construct one-dimensional countable $\Pi^0_1$ subshifts $X, Y \subset S^\Z$ such that $A(X,Y)$ is $\Pi^0_2$-hard.

Define $S = \{ 0, \ldots, 6 \}$, and define $X$ by the set of forbidden patterns
\[ \{ 4 6 \} \cup\{ i j \;|\; i, j \in S, i > j \} \cup \{ 0 1^a 2^{b+1} 3^c 4 \;|\; a, b, c \in \N, a = c \}. \]
Since this set is recursive, $X$ is a $\Pi^0_1$ subshift, and easily seen to be countable. Next, let $\Phi$ be an arithmetical formula with bounded quantifiers such that the set
\[ N = \{ k \in \N \;|\; \forall m \in \N: \exists n \in \N: \Phi(k,m,n) \} \]
is $\Pi^0_2$-hard. Define $Y$ by the set
\[ \{ 0, 1, 4 6 \} \cup \{ i j \;|\; i, j \in S, i > j \} \cup \{ 2 3^a 4^b 5 \;|\; a, b \in \N, \exists n \in \N : \neg \Phi(a, b, n) \} \]
of forbidden patterns. Since this set is $\Sigma^0_1$ by form, $Y$ is a countable $\Pi^0_1$ subshift.

Define the function $w : \N \to L = \{0 1^k 2 \hole \;|\; k \in \N\}$ by $w(k) = 0 1^k 2 \hole$. We note that for all $s \in S$, there exists $t \in \B_1(Y)$ such that $t s$ is forbidden in $X$, which implies that $\hole s$ is forbidden in $A(X,Y)$. Thus $w(k)$ occurs in $A(X,Y)$ if and only if the infinite tail $w(k) \hole \INF$ does. By the definition of $X$ and $Y$, this is the case if and only if $2 3^k 4^m 5 \notin \B(Y)$ for all $m \in \N$. But this is equivalent to $k \in N$, which means that $N = w^{-1}(L \cap \B(A(X,Y)))$, and thus the language of $A(X,Y)$ is $\Pi^0_2$-hard.
\end{proof}

As a corollary of Lemma~\ref{lem:Languages} and the above proposition, we obtain the following counterpart of Proposition~\ref{prop:Soficness}.

\begin{corollary}
There exist countable sofic shifts $X, Y \subset S^{\Z^2}$ such that the universal extension $A(X,Y)$ is not sofic.
\end{corollary}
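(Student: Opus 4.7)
The plan is to combine the two preceding results into a one-line deduction. First I would invoke Proposition~\ref{prop:CompuNonsofic} to pin down concrete countable sofic shifts $X, Y \subset S^{\Z^2}$ for which the language of the universal extension $A(X,Y)$ is $\Pi^0_2$-hard (in fact $\Pi^0_2$-complete). These are the $X, Y$ that the corollary will exhibit.

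Then I would argue by contradiction: suppose $A(X,Y)$ were sofic. Lemma~\ref{lem:Languages} applies in every dimension, so its language would have to be $\Pi^0_1$. But a $\Pi^0_2$-hard set cannot be $\Pi^0_1$, since a computable many-one reduction from an arbitrary $\Pi^0_2$ set to a $\Pi^0_1$ set would place the former in $\Pi^0_1$, collapsing the arithmetical hierarchy at level two and contradicting the well-known strict inclusion $\Pi^0_1 \subsetneq \Pi^0_2$. This contradiction completes the argument.

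There is essentially no obstacle in this step, since all the work has already been done in Proposition~\ref{prop:CompuNonsofic}; the only thing worth spelling out is the standard reasoning that hardness in a strictly larger class of the arithmetical hierarchy precludes membership in the smaller one, and that the $X, Y$ produced by the proposition are themselves \emph{countable} sofic shifts so the stronger form stated in the corollary is obtained for free.
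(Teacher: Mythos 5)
Your proposal is correct and is exactly the paper's argument: the corollary is deduced by combining Proposition~\ref{prop:CompuNonsofic} (which supplies countable sofic $X,Y$ with $\B(A(X,Y))$ being $\Pi^0_2$-hard) with Lemma~\ref{lem:Languages} (sofic implies $\Pi^0_1$ language) and the strictness of the arithmetical hierarchy. Nothing further is needed.
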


While this result is interesting in itself, the proof is not very satisfying, since the subshift $Y$ that we extend by is computationally complex, and we use the simpler structure of $X$ only to check a universally quantified property of $\B(Y)$. 
However, if we restrict $Y$ 
to be a recursive subshift (both $\Pi^0_1$ and $\Sigma^0_1$), Lemma~\ref{lem:Computability} shows that the language of $A(X,Y)$ is $\Pi^0_1$, so a recursion theoretic proof will no longer work.

In the next section, we 
find pairs of computationally simple sofic shifts $X$ and $Y$ such that $A(X,Y)$ is not sofic. The following special case was presented as an open problem by professor Brian Marcus at the 2013 PIMS/EQINOCS Automata Theory and Symbolic Dynamics Workshop: Is the extension $A(X,\{0,1\}^{\Z^2})$ a sofic shift for every sofic $X \subset \{0,1\}^{\Z^2}$?
In \cite{LoMaPa13}, it was asked whether the \emph{multi-choice shift space} (see the cited article for the definition) associated to a two-dimensional sofic shift is necessarily sofic, and the above problem is a restatement of this question in the binary case. Theorem~\ref{thm:MainThm} in particular shows that the answer is negative.

\section{Main Theorem}

In this section, we classify those two-dimensional subshifts that only yield sofic extensions. Before stating the result, we give the following definition.

\begin{definition}
A sofic shift $X \subset S^{\Z^d}$ is \emph{countably covered} if it is the image of a countable SFT via a block map.
\end{definition}

This notion is not standard in the literature. Of course, all countably covered sofic shifts are countable. Now, our main result is the following.

\begin{theorem}
\label{thm:MainThm}
Let $Y \subset S^{\Z^2}$ be a subshift. The following are equivalent:
\begin{enumerate}
\item $Y$ is finite.
\item $A(X,Y)$ is sofic for all sofic shifts $X \subset R^{\Z^2}$ over all alphabets $R$.
\item $A(X,Y)$ is sofic for all countably covered sofic shifts $X \subset S^{\Z^2}$.
\end{enumerate}
\end{theorem}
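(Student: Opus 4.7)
The plan is to establish $(1) \Rightarrow (2) \Rightarrow (3) \Rightarrow (1)$. The implication $(2) \Rightarrow (3)$ is immediate, since the countably covered sofic shifts over $S$ form a subclass of all sofic shifts, so I focus on the other two directions.

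For $(1) \Rightarrow (2)$, assume $Y = \{y_1, \ldots, y_k\}$ is finite. Shift-invariance forces each $y_i$ to be periodic, so the orbit closure $T \subset (S^k)^{\Z^2}$ of the stacked configuration $\mathbf{y}_{\vec v} = ((y_1)_{\vec v}, \ldots, (y_k)_{\vec v})$ is finite and hence an SFT. The key point is that, by shift-closure of $Y$, every coordinate of every $t \in T$ belongs to $Y$, and conversely each $y \in Y$ appears as some coordinate of every $t \in T$. Given an SFT cover $\phi \colon X' \to X$ of $X$, I will form the SFT $Z \subset (X')^k \times T \times \{*, \hole\}^{\Z^2}$ in which at every position $\vec v$ either $s_{\vec v} = *$ and all $\phi(r_i)_{\vec v}$ agree, or $s_{\vec v} = \hole$ and $\phi(r_i)_{\vec v}$ equals the $i$-th coordinate of $t_{\vec v}$. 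The block map outputting $\phi(r_1)_{\vec v}$ when $s_{\vec v} = *$ and $\hole$ otherwise then surjects onto $A(X, Y)$, by essentially the argument of Proposition~\ref{prop:SFTs}.

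For $(3) \Rightarrow (1)$ I argue the contrapositive: assuming $Y$ is infinite, I construct a countably covered sofic $X \subset S^{\Z^2}$ making $A(X, Y)$ non-sofic. The plan is to exploit the infinitude of $\B(Y)$ to provide an unbounded family of test patterns for the universal quantifier in the definition of $A$. Since the cover of $X$ is only a countable SFT, $X$ itself carries very restricted structural information; all of the non-soficness of $A(X,Y)$ must come from the interaction between the forbidden patterns of $X$ and the rich language of $Y$ under universal quantification.

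The main obstacle is this last implication. Since $Y$ may be computationally trivial (as in the Marcus question where $Y = \{0,1\}^{\Z^2}$), the language of $A(X, Y)$ need not exceed the $\Pi^0_1$ bound of Lemma~\ref{lem:Languages}, so non-soficness cannot be established by arithmetic hardness as in Proposition~\ref{prop:CompuNonsofic}, and must instead be exhibited by a concrete combinatorial obstruction. The expected route is to produce, for each $n$, an explicit pattern $P_n \in \B(A(X, Y))$ whose realizability would force any SFT cover of $A(X, Y)$ to distinguish more than $n$ local states at certain positions, contradicting the finiteness of the cover alphabet. Designing $X$ to make such a pigeonhole argument apply uniformly against all candidate SFT covers --- while simultaneously keeping $X$ countably covered --- will be the delicate part of the proof.
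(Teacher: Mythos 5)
Your implications $(1 \Rightarrow 2)$ and $(2 \Rightarrow 3)$ are essentially the paper's own argument: the paper also stacks $k$ copies of an SFT cover of $X$ together with a finite SFT recording an enumeration of $Y = \{y^1,\ldots,y^k\}$ (your orbit closure $T$ plays exactly the role of the paper's $Y^k$-component constrained so that $\{y^{n_1},\ldots,y^{n_k}\} = Y$), forces agreement with the $i$-th listed configuration at hole positions and mutual agreement elsewhere, and projects. That part is fine.

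The problem is $(3 \Rightarrow 1)$, which is the entire content of the theorem, and for which you have supplied a strategy statement rather than a proof. You correctly observe that a computability-theoretic argument cannot work when $Y$ is recursive and that one must instead beat every SFT cover by a counting argument (this is the role of Lemma~\ref{lem:Contexts} in the paper). But everything that makes this work is missing: you never construct $X$, and the construction is where all the difficulty lies. Concretely, the paper must (i) reduce to the case where $Y$ has no common horizontal period, so that $\B(Y)$ restricted to a sparse arithmetic progression of cells is rich enough to realize arbitrary outputs of a marker map --- this is the only way the mere \emph{infinitude} of $Y$ enters, and your sketch never isolates what property of an infinite $Y$ will be exploited; (ii) build a countably covered $X$ whose configurations simultaneously encode two subsets $A, B \subseteq [0,m-1]^2$, one densely in a row of labels and one sparsely in a $g(m)$-spaced grid with $g(m) = o(m)$, linked by a simulated counter machine and the Marker Lemma so that the universal quantification over $Y$-fillings forces $A = B$; and (iii) conclude by pigeonhole, since $2^{m^2}$ choices of $B$ exceed the $C^{mg(m)+1}$ possible boundary contexts of Lemma~\ref{lem:Contexts}. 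Keeping $X$ countably covered while it carries enough information to support step (iii) is precisely the tension you flag as ``the delicate part,'' but flagging it does not resolve it; as written, the implication $(3 \Rightarrow 1)$ is unproved.
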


We continue with a series of lemmas. Most of them are not needed in the special case of binary full shifts, but we use them because Theorem~\ref{thm:MainThm} is much more general.

\begin{lemma}
\label{lem:CountableSofics}
Let $X \subset S^{\Z^2}$ be a countably covered sofic shift, and let $Y \subset (S \times R)^{\Z^2}$ be an SFT. If $Y \cap (X \times R^{\Z^2})$ is countable, then it is a countably covered sofic shift.
\end{lemma}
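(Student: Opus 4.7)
The plan is to build a countable SFT cover of $Z := Y \cap (X \times R^{\Z^2})$ by coupling the given countable SFT cover of $X$ with the SFT $Y$. Let $\phi : X' \to X$ be a surjective block map from a countable SFT $X'$, realized by a local function $\Phi : \B_D(X') \to S$ on a finite window $D \subset \Z^2$, and write $\pi_S : (S \times R)^{\Z^2} \to S^{\Z^2}$ for the projection onto the first coordinate.

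I would then form
\[ W = \{ (x', y) \in X' \times Y \;|\; \forall \vec v \in \Z^2 : \pi_S(y)_{\vec v} = \Phi(x'_{D + \vec v}) \}, \]
and check that $W$ is an SFT: the product $X' \times Y$ is itself an SFT, and the additional coupling between the $X'$-coordinate and the $S$-part of the $Y$-coordinate is a finite-type constraint with window $D$, expressible by finitely many forbidden patterns on the combined alphabet.

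Next I would verify that $W$ is countable. Any $(x', y) \in W$ satisfies $\pi_S(y) = \phi(x') \in X$, which forces $y \in Y \cap (X \times R^{\Z^2}) = Z$. Hence $W \subseteq X' \times Z$, and both factors are countable by hypothesis. The coordinate projection $(x', y) \mapsto y$ restricts to a symbol map $\xi : W \to Z$, and it is surjective: given $z \in Z$, the configuration $\pi_S(z)$ lies in $X$, so there exists $x' \in X'$ with $\phi(x') = \pi_S(z)$, and then $(x', z) \in W$ projects to $z$. This exhibits $Z$ as the image of a countable SFT under a block map, proving the lemma.

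The argument is essentially bookkeeping and I do not foresee a serious obstacle. The one subtle point worth flagging is that, although $W$ is initially described as sitting inside the potentially uncountable set $X' \times Y$, the imposed coupling forces the $Y$-coordinate to land in $Z$; it is precisely the combination of the countability hypothesis on $Z$ with the countability of $X'$ that bounds $W$ and makes the cover countable.
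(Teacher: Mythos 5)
Your proof is correct and follows essentially the same route as the paper: both pull the SFT constraint of $Y$ back through the countable cover of $X$ to get a countable SFT (your $W$ is just the paper's $Y'$ with the redundant $S$-component of the $Y$-coordinate carried along explicitly rather than reconstructed via $f$), then project back down by a block map. The countability argument via $W \subseteq X' \times Z$ and the surjectivity check are exactly the points the paper's proof relies on as well.
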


\begin{proof}
Let $X = f(Z)$, where $Z \subset T^{\Z^2}$ is a countable SFT and $f : Z \to X$ a block map with neighborhood $N \subset \Z^2$. Let $F \subset T^D$ and $G \subset (S \times R)^D$ be sets of forbidden patterns for $Z$ and $Y$, where $D \subset \Z^2$ is finite. Then the set $F \times R^D \cup \{ (P,Q) \in (T \times R)^{D+N} \;|\; (f(P), Q|_D) \in G \}$ defines the SFT $Y' = \{ (z, z') \in Z \times R^{\Z^2} \;|\; (f(z), z') \in Y \}$, which is countable since $Y$ is. The image of $Y'$ under the block map $(z, z') \mapsto (f(z), z')$ is exactly $Y$, and the claim follows.
\end{proof}

The following is one of the few known methods for showing a multidimensional subshift to be nonsofic. A proof of it has appeared at least in \cite{KaMa13}, but the technique is much older, presumably originating from the theory of picture languages. In the proof, the notation $\partial D$ for a domain $D \subset \Z^2$ stands for the set
\[ \{ (a,b) \in \Z^2 \;|\; (a,b) \notin D, \{(a+1,b),(a-1,b),(a,b+1),(a,b-1)\} \cap D \neq \emptyset \} \subset \Z^2. \]

\begin{lemma}
\label{lem:Contexts}
Let $X \subset S^{\Z^2}$ be a sofic shift. Then there exists $C > 0$ such that for all $n \geq 1$ and all sets $\Lambda \subset X$ of size at least $C^n + 1$, there exist $x \neq y \in \Lambda$ such that $c(x,y,n) \in X$, where $c(x,y,n) \in S^{\Z^2}$ is defined by $c(x,y,n)_{[0,n-1]^2} = x_{[0,n-1]^2}$ and $c(x,y,n)_{\Z^2 \setminus [0,n-1]^2} = y_{\Z^2 \setminus [0,n-1]^2}$.
\end{lemma}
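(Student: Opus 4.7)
The plan is a straightforward pigeonhole argument exploiting the fact that a sofic shift is the image of a \emph{tiling system} under a \emph{symbol map}, as recorded at the end of Section~\ref{sec:Defs}. The crucial point is that the local constraints of a tiling system involve only pairs of adjacent cells, so matching two preimage configurations along the one-cell-thick boundary $\partial [0,n-1]^2$ suffices to glue them.

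First I would write $X = \phi(Z)$ with $Z \subset T^{\Z^2}$ a tiling system and $\phi : Z \to X$ a surjective symbol map, and set $C = |T|^4$. A direct count gives $|\partial [0,n-1]^2| = 4n$, hence at most $|T|^{4n} = C^n$ distinct patterns can occur as restrictions to $\partial [0,n-1]^2$. For each $x \in \Lambda$ I would select some preimage $z_x \in Z$ with $\phi(z_x) = x$. If $|\Lambda| \geq C^n + 1$, the pigeonhole principle produces $x \neq y \in \Lambda$ with $z_x|_{\partial [0,n-1]^2} = z_y|_{\partial [0,n-1]^2}$.

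Next I would define $z \in T^{\Z^2}$ by $z|_{[0,n-1]^2} = z_x|_{[0,n-1]^2}$ and $z|_{\Z^2 \setminus [0,n-1]^2} = z_y|_{\Z^2 \setminus [0,n-1]^2}$, and verify $z \in Z$. Any forbidden $2$-cell pattern in $z$ would have to straddle the boundary, with interior cell $\vec u \in [0,n-1]^2$ and exterior cell $\vec v \in \partial [0,n-1]^2$. But then $z_{\vec u} = (z_x)_{\vec u}$ and $z_{\vec v} = (z_y)_{\vec v} = (z_x)_{\vec v}$ by the pigeonhole agreement, so the pair appears in the valid configuration $z_x$, contradiction. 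Since $\phi$ is a symbol map, $\phi(z)$ then agrees with $x$ on $[0,n-1]^2$ and with $y$ on its complement, giving $c(x,y,n) = \phi(z) \in X$.

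The only delicate point is the gluing step, and it is delicate only because one must resist the temptation to use a block map and a thicker boundary; using the tiling-system representation keeps the boundary linear in $n$, so the pigeonhole bound comes out as a clean $C^n$. Once that representation is fixed, no further work is needed.
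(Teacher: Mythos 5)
Your proof is correct and follows essentially the same route as the paper: represent $X$ as the image of a tiling system under a symbol map, take $C = |T|^4$, pigeonhole on the $4n$-cell boundary $\partial[0,n-1]^2$ of chosen preimages, and glue. The only difference is that you spell out the verification that the glued preimage avoids all forbidden two-cell patterns, which the paper leaves implicit.
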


\begin{proof}
Since $X$ is sofic, there exist a tiling system $Y \subset T^{\Z^2}$ and a surjective symbol map $\phi : Y \to X$. We claim that $C = |T|^4$ suffices. Let $n \geq 1$, and for all $x \in X$, choose a preimage $\tilde x \in \phi^{-1}(x)$. Let $x, y \in X$. If we have $\tilde x_{\partial [0,n-1]^2} = \tilde y_{\partial [0,n-1]^2}$, then the configuration $c(\tilde x, \tilde y, n) \in T^{\Z^2}$ is in $Y$, so $\phi(c(\tilde x, \tilde y, n)) = c(x, y, n) \in X$. Since the number of $\tilde x_{\partial [0,n-1]^2}$ for $x \in X$ is at most $C^n = |T|^{4n}$, the claim follows.
\end{proof}

The proof of Theorem~\ref{thm:MainThm} requires the use of a computational device, and the following nonstandard version of the classical counter machine suits our needs.

\begin{definition}
A \emph{counter machine with string input} (CMS for short) is a tuple $M = (k,k',S,\Sigma,\delta,q_0,q_a,q_r)$, where $k \in \N$ is the number of \emph{counters}, $k' < k$ the number of \emph{output counters}, $\Sigma$ is a finite \emph{state set}, $S$ the finite \emph{input alphabet}, $q_0, q_a, q_r \in \Sigma$ the \emph{initial, accepting and rejecting states} and
\[ \delta \subset (\Sigma \times [1,k] \times \{Z,P\} \times \Sigma) \cup (\Sigma \times [1,k] \times \{-1,0,1\} \times \Sigma) \cup (\Sigma \times \bar S \times \Sigma) \]
the \emph{transition relation}, where $\bar S = S \cup \{\#\}$ for a new symbol $\# \notin S$. An \emph{instantaneous description (ID) of $M$} is an element $(q,w,n_1,\ldots,n_k)$ of $\Sigma \times S^* \times \N^k$, with the interpretation that $M$ is in state $q$ with input word $w$ and counter values $n_1, \ldots, n_k$.

The CMS $M$ operates in possibly nondeterministic steps as follows. If we have $(p, i, Z, q) \in \delta$ ($(p, i, P, q) \in \delta$), then from any ID $(p,w,n_1,\ldots,n_k)$ with $n_i = 0$ ($n_i > 0$, respectively), $M$ may move to the ID $(q,w,n_0,\ldots,n_k)$. If $(p, i, r, q) \in \delta$ with $r \in \{-1,0,1\}$, then from any ID $(p,w,n_1,\ldots,n_i,\ldots,n_k)$, $M$ may move to $(q,w,n_1,\ldots,n_i+r,\ldots,n_k)$. Finally, if $(p, s, q) \in \delta$ for $s \in S$ ($(p, \#, q) \in \delta$), then then from any ID $(p,w,n_1,\ldots,n_k)$ with $w_{n_k} = s$ ($|w| \leq n_k$, respectively), $M$ may move to $(q,w,n_1,\ldots,n_k)$. We assume that $M$ never decrements a counter below $0$.

The CMS is initialized from an ID $(q_0, w, 0, \ldots, 0)$ for $w \in S^*$, and halts when it reaches the states $q_a$ or $q_r$. The tuple $(n_1,\ldots,n_{k'}) \in \N^{k'}$ of the first $k'$ counter values in the accepting state $q_a$ is the \emph{output} of $M$ on $w$, and is denoted $M(w)$. Note that $n_1$ is the value of the first counter, not the greatest counter value. If $M$ reaches the rejecting state $q_r$ or never halts, no output is generated.
\end{definition}

In one step, a CMS may increment or decrement one of its counters, check whether a counter is zero, or check the input symbol under the last counter, if one exists. When it halts in the accepting state, the counter values of the final ID are considered as outputs. Thus a CMS can be interpreted as a partial function from $S^*$ to $\N^{k'}$. The classical reference for counter machines is \cite{Mi67}, although CMSs are not defined there. Conventional counter machines are computationally universal, and it is not hard to see that the same holds for these devices.

\begin{lemma}
Every recursive partial function $f : S^* \to \N^{k'}$ can be realized as a CMS.
\end{lemma}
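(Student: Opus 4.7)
The plan is to invoke Minsky's classical result that counter machines are Turing-universal, and to adapt it to the CMS model by adding a preprocessing phase that converts the string-reading mechanism into a numerical counter value. Given a recursive partial function $f : S^* \to \N^{k'}$, I would fix a deterministic Turing machine $T$ that on input $w$ either halts with an encoding of $f(w)$ written on its tape, or diverges when $f(w)$ is undefined, and construct a CMS $M$ with $k = k' + c$ counters for a suitable constant $c$ that depends on $T$.

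First, $M$ scans $w$ left-to-right using the input-reading mechanism on counter $k$, translating the word into an integer encoding stored in an auxiliary counter: for instance, interpreting $w$ as a numeral in base $|S|+1$. Each iteration tests the symbol currently under counter $k$, multiplies the accumulator by $|S|+1$ (a standard subroutine that uses two scratch counters and zero-tests), adds the numerical value of the symbol, and then increments counter $k$ to advance the read head. The loop terminates when the $\#$-transition fires, signalling that $M$ has moved past the end of $w$.

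Second, counter $k$ is emptied by a decrement-until-zero loop so that it can be reused, and $M$ enters a simulation phase in which no further input-reading transitions appear. In this phase $M$ operates as an ordinary multi-counter machine and, via Minsky's construction, simulates $T$ step by step; two counters together with the finite state set $\Sigma$ suffice, and all remaining counters beyond the first $k'$ can be used as scratch space. When the simulation reaches the halting configuration of $T$, a final decoding phase reads the output $(n_1, \ldots, n_{k'})$ out of the Minsky encoding and deposits these values into counters $1, \ldots, k'$, after which $M$ transitions into $q_a$. If $T$ diverges on $w$, then $M$ likewise never enters $q_a$ or $q_r$, and no output is produced.

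The hardest part will be the first phase, because counter $k$ serves simultaneously as the read head and as an ordinary counter: the scan must proceed without ever decrementing $k$ before the entire word has been absorbed (since a decrement would rescan a previously-read symbol), and the multiplication subroutine must operate on counters disjoint from $k$. Once this bookkeeping is in place and $k$ has been cleared for reuse, the remainder is a routine adaptation of the standard counter-machine universality argument, and I expect no further difficulties.
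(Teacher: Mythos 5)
The paper does not actually prove this lemma: it is stated without proof, justified only by the preceding remark that conventional counter machines are computationally universal and that ``it is not hard to see that the same holds for these devices.'' Your sketch is therefore filling a gap the author left as folklore, and it does so correctly and in the canonical way: a left-to-right scan that absorbs $w$ into a numerical encoding using the reading mechanism on counter $k$, followed by a pure counter-machine simulation of a Turing machine computing $f$, followed by a decoding phase that deposits the components of $f(w)$ into counters $1,\ldots,k'$ before entering $q_a$. Two small remarks. First, your worry that decrementing counter $k$ would ``rescan a previously-read symbol'' is unfounded in this model: the reading transitions $(p,s,q)$ merely test $w_{n_k}=s$ without consuming anything, so re-reading is permitted and harmless; you simply never need it. Second, interpreting $w$ as a numeral in base $|S|+1$ loses leading symbols if some symbol is assigned digit value $0$; assign the symbols the values $1,\ldots,|S|$ (so that the encoding $w \mapsto \sum_i \mathrm{val}(w_i)(|S|+1)^i$ is injective) and this disappears. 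With those cosmetic fixes, and granting the standard facts that multiplication by a constant and Turing-machine simulation are implementable with finitely many auxiliary counters, your argument is complete and is exactly what the author presumably had in mind.
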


We also need the following result from symbolic dynamics. It is slightly stronger than the version in \cite{LiMa95}, but the missing details can be extracted from its proof.

\begin{lemma}[Marker Lemma]
\label{lem:Marker}
Let $S$ be a finite alphabet. For all $n \in \N$ there exists a block map $f_n : S^\Z \to \{0,1\}^\Z$, called the \emph{$n$-marker map}, such that:
\begin{enumerate}
\item The radius of $f_n$ is at most $r_n = n^{|S|^{2n+1}}$.
\item For $k < n-1$, the word $1 0^k 1$ does not occur in any configuration of $f_n(S^\Z)$.
\item If $x \in S^\Z$ is such that $f_n(x)_{[-n+1,n-1]} = 0^{2n-1}$, then $x_{[-n,n]}$ is periodic with period less than $n$.
\item The function $(n, w) \mapsto F_n(w)$, where $w \in S^{r_n}$ and $F_n$ is the local function of $f_n$, is recursive.
\end{enumerate}
\end{lemma}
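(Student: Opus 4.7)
The plan is to adapt the classical marker construction from \cite{LiMa95}, keeping explicit track of the radius and the recursive content of the local function. I would fix a computable total order on $S$, which induces the lexicographic order on patterns of each fixed size, enumerate the length-$(2n+1)$ words over $S$ as $w_1, w_2, \ldots, w_N$ with $N = |S|^{2n+1}$, and build $f_n$ in $N$ greedy passes. At pass $k$, I would set $F_n^{(k)}(x)_i = 1$ whenever $F_n^{(k-1)}(x)_i$ was already $1$, or $x_{[i-n, i+n]} = w_k$ and no $j$ with $0 < |j - i| < n$ has $F_n^{(k-1)}(x)_j = 1$. The final block map is $f_n = F_n^{(N)}$.

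Properties 1, 2, and 4 should follow from this construction with little work. Property 2 is enforced by the blocking rule at each pass; two markers cannot be placed within distance $n-1$ of each other. Property 4 is clear from the finitary, effective description: given $n$ and a sufficiently long local view of $x$, one simulates all $N$ passes directly. For property 1, the radius grows by at most $O(n)$ per pass, so the final radius is bounded by $O(nN)$, comfortably within the claimed bound $r_n = n^{|S|^{2n+1}}$, which is a very loose overall estimate leaving room for any refinement of the construction.

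The heart of the proof is property 3, which I would approach contrapositively: assuming $x_{[-n, n]}$ has no period strictly less than $n$, I would exhibit a marker in $[-n+1, n-1]$. If every position in this range were unmarked, then each such $i$ would have been blocked at its processing stage $k_i$ by a marker already placed at some $j$ with $|j - i| < n$, whose surrounding window $x_{[j-n, j+n]}$ necessarily precedes $x_{[i-n, i+n]}$ in the enumeration. Combining the blocker data for all $2n-1$ positions in $[-n+1, n-1]$ with a pigeonhole argument on length-$(2n+1)$ windows reached by these blocking chains should force the existence of two nearby positions $i' \neq j'$ with identical length-$(2n+1)$ windows, which translates into a period of length $|i' - j'| < n$ on a segment containing $[-n, n]$, contradicting aperiodicity. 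The main obstacle will be controlling the spatial range within which these chains can wander, so that the matched pair lies close enough to the origin for the resulting periodic segment actually to cover $[-n, n]$ rather than a shifted neighboring segment; this is the delicate bookkeeping step and is what justifies looking at a window as large as $r_n$ around each site.
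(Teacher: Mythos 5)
The paper's own proof is a two-line reduction to Lemma~10.1.8 of \cite{LiMa95} (take the clopen set $C$ provided there and set $f_n(x)_i = 1$ iff $\sigma^i(x) \in C$), so you are taking a genuinely different, self-contained route by rebuilding the marker construction from scratch. That is legitimate, but as written your construction has a real gap: you enumerate \emph{all} words of length $2n+1$ and let every occurrence compete for a marker, with blocking determined only by markers from \emph{previous} passes. Consequently, two overlapping occurrences of the \emph{same} word $w_k$ at distance $0 < p < n$ are not mutually blocking and both receive markers in pass $k$. Concretely, on the constant configuration $x = {}^\infty a^\infty$ only the word $a^{2n+1}$ ever occurs, so at the pass where it is processed every position is unblocked and every position gets marked; then $11$ occurs in $f_n(x)$, violating property~2. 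No shift-commuting tie-break can repair this on periodic points. The standard fix, which is exactly what the Lind--Marcus construction does, is to restrict the enumeration to central windows $w \in S^{2n+1}$ having \emph{no period less than $n$}: two occurrences of such a word at distance $p < n$ would force $w$ to have period $p$, so same-pass conflicts cannot arise, and property~2 then really does follow from the blocking rule.

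Once that restriction is in place, your argument for property~3 becomes far simpler than the chain-and-pigeonhole scheme you sketch (which, incidentally, already collapses in your own setup: the blocker of position $0$ lies at distance less than $n$ from $0$, hence inside $[-n+1,n-1]$, contradicting your assumption that this whole range is unmarked --- no chains are needed). Argue directly: if $x_{[-n,n]}$ has no period less than $n$, then it is one of the enumerated aperiodic words, say $w_k$; at pass $k$ either position $0$ receives a marker, or it is blocked by a marker already present at some $j$ with $|j| < n$, and markers persist, so in either case $f_n(x)_{[-n+1,n-1]} \neq 0^{2n-1}$. Your radius estimate (roughly $n$ plus $n-1$ per pass, well under $r_n = n^{|S|^{2n+1}}$) and the recursiveness of $(n,w) \mapsto F_n(w)$ are fine and survive the fix unchanged.
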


\begin{proof}
Lemma~10.1.8 of \cite{LiMa95} states the existence of a clopen set $C \subset S^\Z$ such that the shifted sets $\sigma^i(C)$ are disjoint for all $i \in [0, n-1]$, and if $\sigma^i(x) \notin C$ for all $i \in [-n+1, n-1]$, then $x_{[-n,n]}$ is periodic with period less than $n$. We define $f_n(x)_i = 1$ if and only if $\sigma^i(x) \in C$, and then $f_n : S^\Z \to \{0,1\}^\Z$ is a block map satisfying (2) and (3). The other claims follow from the construction of $C$ in \cite{LiMa95}.
\end{proof}

Finally, we have a general construction of grid-like countably covered sofic shifts.

\begin{lemma}
\label{lem:GridsExist}
For all $m, n \in \N \cup \{\infty\}$, there exists a countably covered sofic shift $X = X_{G(m,n)}$ over the alphabet $\{\#\} \cup \{0,1\}^2$ such that:
\begin{itemize}
\item For all $x \in X$, the set $D(x) = \{\vec v \in \Z^2 \;|\; x_{\vec v} \neq \# \}$ is a (possibly infinite) rectangle.
\item For all $k \geq 2$, $1 \leq a \leq \binom{m+k}{k}$ and $1 \leq b \leq \binom{n+k}{k}$, there is a configuration $x = x^{k,a,b} \in X$ such that $D(x) = [0, a k] \times [0, b k]$, and for all $\vec v = (i,j) \in D(x)$ we have $\pi_1(x_{\vec v}) = 1$ ($\pi_2(x_{\vec v}) = 1$) if and only if $i \equiv 0 \bmod k$ ($j \equiv 0 \bmod k$, respectively), where $\pi_1, \pi_2 : \{0,1\}^2 \to \{0,1\}$ are the projections to the first and second components.
\item If $m = \infty$ ($n = \infty$), then every horizontal (vertical) line of $1$'s in the second (first) layer of $X$ is infinite to the right (upwards), and otherwise, every configuration of $X$ contains only finitely many vertical (horizontal) lines of $1$'s in its first (second) layer.
\end{itemize}
\end{lemma}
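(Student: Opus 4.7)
The plan is to construct a countable SFT $Z = Z_{G(m,n)}$ over a fixed working alphabet and a surjective symbol map $\phi : Z \to X_{G(m,n)}$. The SFT will enforce (i) that $D(x)$ is a (possibly unbounded) axis-aligned rectangle, (ii) that the grid on this region has uniform mesh $k \geq 2$, and (iii) that the horizontal (vertical) extent is finite when $m < \infty$ ($n < \infty$) and extends to $+\infty$ when $m = \infty$ ($n = \infty$). The map $\phi$ is the identity on $\#$ and sends each interior cell to a pair $(b_1, b_2) \in \{0,1\}^2$, where $b_i = 1$ iff the $i$-th grid counter is $0$ at that cell.

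First I would set up a \emph{frame} layer using distinguished corner, edge, interior, and exterior symbols whose local rules force $D(x)$ to be an axis-aligned rectangle; the precise shapes allowed (finite rectangle, strip, quadrant, half-plane, full plane) depend on which of $m, n$ are finite. Inside the frame I would superimpose two modular counter layers taking values in $\{0, 1, \ldots, k-1\}$, propagating with increment modulo $k$ across columns/rows and resetting to $0$ at the left/bottom edges. This standard setup produces, for each $k \geq 2$, arbitrary rectangular grids with mesh $k$, but with no upper bound on the number of columns or rows.

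To enforce finite horizontal extent when $m < \infty$, I would attach $m$ additional \emph{digit} layers to each grid column, jointly encoding an $m$-digit counter. SFT rules require this counter to start at $0$ at the leftmost grid column and strictly increase (in any fixed order) from one grid column to the next, terminating the rectangle before all its possible states are exhausted. Using an encoding with at least $\binom{m+k}{k}$ distinct counter values --- for example unary digits in $\{0, \ldots, k\}$ spread across the $k$ cells of a grid column, which gives $(k+1)^m \geq \binom{m+k}{k}$ states for every $k \geq 2$ --- ensures that every column count $a \in \{1, \ldots, \binom{m+k}{k}\}$ is attainable. A symmetric construction treats rows; when $m = \infty$ (resp.\ $n = \infty$) the corresponding digit layers are omitted and the frame rules forbid a right (top) boundary, so grid lines extend to infinity as required.

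Finally, $Z$ is countable because each configuration is determined by $k$, by the position of $D(x)$ in $\Z^2$, and by the finitely many digit values recorded along the leftmost column and bottom row; these choices form a countable set. Direct inspection then shows that $x^{k,a,b}$ is realized as the image of the finite-rectangle configuration of the prescribed dimensions whose horizontal and vertical digit counters terminate at the states of rank $a-1$ and $b-1$. The main technical obstacle I anticipate is making the digit encoding consistent while keeping the working alphabet independent of $k$: digits must be distributed across the cells of a grid column in such a way that adjacent columns can be compared by a fixed-radius SFT window, and the strict-increase rule must be expressible as a finite set of forbidden local patterns. This is what forces a distributed (for example unary) digit encoding rather than a fixed-size base-$(k+1)$ representation.
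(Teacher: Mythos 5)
There is a genuine gap, and it sits at the heart of the lemma rather than in the technicalities you flag at the end. The statement asks for a \emph{single} subshift $X_{G(m,n)}$, covered by a single countable SFT over a fixed finite alphabet, containing the configurations $x^{k,a,b}$ for \emph{every} $k \geq 2$ simultaneously. Your mesh mechanism --- ``two modular counter layers taking values in $\{0,1,\ldots,k-1\}$, propagating with increment modulo $k$'' --- needs an alphabet whose size grows with $k$, so it yields one SFT per value of $k$ but cannot realize all meshes inside one SFT; the same objection applies to digit layers carrying ``unary digits in $\{0,\ldots,k\}$.'' You correctly anticipate that keeping the alphabet independent of $k$ is an obstacle, but your proposal contains no mechanism that overcomes it. The paper's construction never stores $k$ in any symbol: each grid cell carries a diagonal signal forcing it to be a square, and since the grid lines span the whole rectangular region, all cells in a row of cells share a height and all cells in a column of cells share a width, so all cells automatically have one common, unconstrained side length $k$. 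Some such geometric device is indispensable here.

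Your column counter is also structured differently from the paper's, in a way that creates extra work. The paper puts a single label from $[0,m]$ on each interior tile, requires each cell-column of labels to be a monotone vector, and uses local $3\times 2$ rules to force the vector of a cell to be the lexicographic successor of its left neighbour within the set $V_m^k$ of monotone vectors; the count $|V_m^k|=\binom{m+k}{k}$ is precisely where the binomial bound in the statement comes from. Your $m$ distributed unary digits give $(k+1)^m\geq\binom{m+k}{k}$ states, which would suffice for realizing every $a\leq\binom{m+k}{k}$ and for boundedness, but enforcing a strict-increase (let alone successor) relation between two $m$-tuples of unary numbers spread over a column of unbounded height by a fixed-radius window is substantially harder to localize than the paper's monotone-vector successor, and you give no indication of how to do it. Finally, your countability argument only treats configurations in which a finite grid is visible; the degenerate configurations (no boundary anywhere, infinite cells, pure half-plane configurations) are exactly where uncountability tends to enter, and the paper needs a separate argument --- resting on the monotonicity of the interior labels --- to dispose of them.
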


\begin{proof}
We construct a countable SFT $Y \subset A^{\Z^2}$ and a symbol map $\pi : A \to \{0,1\}$ such that $\pi(Y) = X$. The alphabet $A$ is the set of tiles in Figure~\ref{fig:GridAlph}, where the labels $C_\ell$ range over $[0,\ell]$ if $\ell \in \N$, and $\{\infty\}$ if $\ell = \infty$. Note that some tiles are forbidden if $m$ or $n$ is infinite. Every $2 \times 2$ pattern where the lines or colors of some tiles do not match (including the diagonal lines) is forbidden in $Y$. Then the regions colored by $L$, $R$, $B$, and $T$ in a configuration $y \in Y$, if nonempty, form a left half plane, a right half plane, a downward infinite rectangle, and an upward infinite rectangle, respectively. The rectangular area not contained in them is called the \emph{grid} of $y$. It is divided into rectangles by the \emph{grid lines} (the thick lines in Figure~\ref{fig:GridAlph}), which stretch from one end of the grid to the other. These rectangles are actually squares, all of the same size, because of the diagonal lines. We also forbid every $2 \times 2$ pattern containing such a square, so that every square contains at least one \emph{interior tile}, shown in the fifth column of the figure. If $m$ ($n$) is infinite, then so is the width (height) of every grid, as it cannot have a right (top, respectively) border.

The labels $C_m$ of the interior tiles have the following rules if $m \neq \infty$. Inside a square of the grid, they must be horizontally constant and downward increasing (all patterns $c d$ for $c \neq d$ and $\begin{smallmatrix} c \\ d \end{smallmatrix}$ for $c > d$ are forbidden). On the border of two horizontally adjacent squares, all $3 \times 2$ patterns except
\[
\left[ \begin{smallmatrix} {-} & {+} & {-} \\ e & | & e \end{smallmatrix} \right],
\left[ \begin{smallmatrix} {-} & {+} & {-} \\ e & | & e+1 \end{smallmatrix} \right],
\left[ \begin{smallmatrix} e & | & e \\ f & | & f \end{smallmatrix} \right],
\left[ \begin{smallmatrix} e & | & e \\ f & | & f+1 \end{smallmatrix} \right],
\left[ \begin{smallmatrix} e & | & e+1 \\ m & | & e+1 \end{smallmatrix} \right],
\left[ \begin{smallmatrix} m & | & c \\ m & | & c \end{smallmatrix} \right],
\left[ \begin{smallmatrix} e & | & e+1 \\ {-} & {+} & {-} \end{smallmatrix} \right],
\left[ \begin{smallmatrix} m & | & c \\ {-} & {+} & {-} \end{smallmatrix} \right]
\]
for $c, d \in [0,m]$ and $e, f \in [0,m-1]$ are forbidden (the symbols ${-}$, ${|}$ and ${+}$ represent horizontal, vertical and crossing grid lines, including T-junctions). Denote by $V_m^k$ the set of length-$k$ downward increasing column vectors over $[0,m]$. The above rules imply that for all horizontally adjacent grid squares with $k \times k$ interior tiles, the column vector formed by the bottom-right labels of the interior tiles of the right square is the lexicographical successor of that of the left square with respect to the set $V_m^k$. Thus the width of any grid containing such a square is at most $|V_m^k| = \binom{m+k}{k}$ squares. To ensure countability, we also require that the labels next to the left border of a grid are all $0$. We introduce analogous rules for the top-left labels, but transposed, so that the height of the grid is at most $\binom{n+k}{k}$ squares. This concludes the definition of $Y$. We set $\pi(t) \neq \#$ if and only if the tile $t$ contains a gray region, and then $\pi_1(\pi(t)) = 1$ ($\pi_2(\pi(t)) = 1$) if and only if $t$ contains a vertical (horizontal) grid line. The three conditions for $X$ follow easily.

We show that $Y$ is countable, so let $y \in Y$. If a finite grid square occurs in $y$, then there are countably many choices for the position of the grid, which uniquely determines its contents (because of the restrictions on the column vectors introduced above) and the rest of $y$. If $y$ contains no grid tiles, then it consists of the $L$, $R$, $B$ and $T$-tiles, for which we have countably many choices. In the case of infinite squares, since the labels of the interior tiles are decreasing in one direction and constant in the other, our choices are again restricted to a countable set.
\end{proof}

\begin{figure}[htp]
\begin{center}
\begin{tikzpicture}[scale=.8]


  \draw (0,0) rectangle (1,1);
  \node () at (.5,.5) {$L$};

  \begin{scope}[shift={(1.5,0)}]
  \fill[black!30] (.5,.5) -- (1,1) -- (1,.5);
  \fill[black!15] (.5,.5) -- (1,1) -- (.5,1);
  \draw (0,0) rectangle (1,1);
  \draw[densely dotted] (.5,0) -- (.5,.5);
  \draw[very thick] (.5,1) -- (.5,.5) -- (1,.5);
  \draw[dashed] (.5,.5) -- (1,1);
  \node () at (.25,.5) {$L$};
  \node () at (.75,.25) {$B$};
  \end{scope}

  \begin{scope}[shift={(3,0)}]
  \fill[black!30] (0,.5) rectangle (1,1);
  \draw (0,0) rectangle (1,1);
  \draw[very thick] (0,.5) -- (1,.5);
  \node () at (.5,.25) {$B$};
  \end{scope}

  \begin{scope}[shift={(4.5,0)}]
  \fill[black!30] (0,.5) rectangle (.5,1);
  \fill[black!30] (.5,.5) -- (1,1) -- (1,.5);
  \fill[black!15] (.5,.5) -- (1,1) -- (.5,1);
  \draw (0,0) rectangle (1,1);
  \draw[very thick] (0,.5) -- (1,.5);
  \draw[very thick] (.5,.5) -- (.5,1);
  \draw[dashed] (.5,.5) -- (1,1);
  \node () at (.5,.25) {$B$};
  \end{scope}

  \begin{scope}[shift={(6,0)}]
  \draw[fill=black!30] (0,0) rectangle (1,1);
  \node () at (1/3,3/4) {$C_m$};
  \node () at (2/3,1/4) {$C_n$};
  \end{scope}

  \begin{scope}[shift={(7.5,0)}]
  \fill[black!30] (0,.5) rectangle (.5,1);
  \draw (0,0) rectangle (1,1);
  \draw[densely dotted] (.5,0) -- (.5,.5);
  \draw[very thick] (0,.5) -- (.5,.5) -- (.5,1);
  \node () at (.75,.5) {$R$};
  \node () at (.25,.25) {$B$};
  \end{scope}

  \begin{scope}[shift={(9,0)}]
  \draw (0,0) rectangle (1,1);
  \node () at (.5,.5) {$R$};
  \end{scope}

  \begin{scope}[shift={(10.5,0)}]
  \draw (0,0) rectangle (1,1);
  \node () at (.5,.5) {$B$};
  \end{scope}


  \begin{scope}[shift={(0,1.5)}]
  \fill[black!15] (.5,0) rectangle (1,1);
  \draw (0,0) rectangle (1,1);
  \draw[very thick] (.5,0) -- (.5,1);
  \node () at (.25,.5) {$L$};
  \end{scope}

  \begin{scope}[shift={(1.5,1.5)}]
  \fill[black!15] (.5,.5) rectangle (1,0);
  \fill[black!15] (.5,.5) -- (1,1) -- (.5,1);
  \fill[black!30] (.5,.5) -- (1,1) -- (1,.5);
  \draw (0,0) rectangle (1,1);
  \draw[very thick] (.5,0) -- (.5,1);
  \draw[very thick] (.5,.5) -- (1,.5);
  \draw[dashed] (.5,.5) -- (1,1);
  \node () at (.25,.5) {$L$};
  \end{scope}

  \begin{scope}[shift={(3,1.5)}]
  \fill[black!15] (0,0) rectangle (1,.5);
  \fill[black!30] (0,.5) rectangle (1,1);
  \draw (0,0) rectangle (1,1);
  \draw[very thick] (0,.5) -- (1,.5);
  \end{scope}

  \begin{scope}[shift={(4.5,1.5)}]
  \fill[black!30] (0,.5) rectangle (.5,1);
  \fill[black!15] (.5,0) rectangle (1,.5);
  \fill[black!30] (.5,.5) -- (1,1) -- (1,.5);
  \fill[black!15] (.5,.5) -- (1,1) -- (.5,1);
  \fill[black!30] (0,0) -- (.5,.5) -- (.5,0);
  \fill[black!15] (0,0) -- (.5,.5) -- (0,.5);
  \draw (0,0) rectangle (1,1);
  \draw[very thick] (0,.5) -- (1,.5);
  \draw[very thick] (.5,0) -- (.5,1);
  \draw[dashed] (0,0) -- (1,1);
  \end{scope}

  \begin{scope}[shift={(6,1.5)}]
  \fill[black!30] (0,0) -- (1,1) -- (1,0);
  \fill[black!15] (0,0) -- (1,1) -- (0,1);
  \node () at (1/3,3/4) {$C_m$};
  \node () at (2/3,1/4) {$C_n$};
  \draw[dashed] (0,0) -- (1,1);
  \draw (0,0) rectangle (1,1);
  \end{scope}

  \begin{scope}[shift={(7.5,1.5)}]
  \fill[black!30] (0,.5) rectangle (.5,1);
  \fill[black!30] (0,0) -- (.5,.5) -- (.5,0);
  \fill[black!15] (0,0) -- (.5,.5) -- (0,.5);
  \draw (0,0) rectangle (1,1);
  \draw[dashed] (0,0) -- (.5,.5);
  \draw[very thick] (.5,0) -- (.5,1);
  \draw[very thick] (0,.5) -- (.5,.5);
  \node () at (.75,.5) {$R$};
  \end{scope}

  \begin{scope}[shift={(9,1.5)}]
  \fill[black!30] (0,0) rectangle (.5,1);
  \draw (0,0) rectangle (1,1);
  \draw[very thick] (.5,0) -- (.5,1);
  \node () at (.75,.5) {$R$};
  \end{scope}

  \begin{scope}[shift={(10.5,1.5)}]
  \draw (0,0) rectangle (1,1);
  \node () at (.5,.5) {$T$};
  \end{scope}


  \begin{scope}[shift={(0,3)}]
  \draw (0,0) rectangle (1,1);
  \draw[densely dotted] (.5,0) -- (.5,1);
  \node () at (.25,.5) {$L$};
  \node () at (.75,.5) {$T$};
  \end{scope}

  \begin{scope}[shift={(1.5,3)}]
  \fill[black!15] (.5,.5) rectangle (1,0);
  \draw (0,0) rectangle (1,1);
  \draw[very thick] (.5,0) -- (.5,.5) -- (1,.5);
  \draw[densely dotted] (.5,.5) -- (.5,1);
  \node () at (.25,.5) {$L$};
  \node () at (.75,.75) {$T$};
  \end{scope}

  \begin{scope}[shift={(3,3)}]
  \fill[black!15] (0,.5) rectangle (1,0);
  \draw (0,0) rectangle (1,1);
  \draw[very thick] (0,.5) -- (1,.5);
  \node () at (.5,.75) {$T$};
  \end{scope}

  \begin{scope}[shift={(4.5,3)}]
  \fill[black!15] (.5,.5) rectangle (1,0);
  \fill[black!30] (.5,.5) -- (0,0) -- (.5,0);
  \fill[black!15] (.5,.5) -- (0,0) -- (0,.5);
  \draw (0,0) rectangle (1,1);
  \draw[very thick] (0,.5) -- (1,.5);
  \draw[very thick] (.5,.5) -- (.5,0);
  \draw[dashed] (.5,.5) -- (0,0);
  \node () at (.5,.75) {$T$};
  \end{scope}

  \begin{scope}[shift={(6,3)}]
  \fill[black!15] (0,0) rectangle (1,1);
  \node () at (1/3,3/4) {$C_m$};
  \node () at (2/3,1/4) {$C_n$};
  \draw (0,0) rectangle (1,1);
  \end{scope}

  \begin{scope}[shift={(7.5,3)}]
  \fill[black!30] (0,0) -- (.5,.5) -- (.5,0);
  \fill[black!15] (0,0) -- (.5,.5) -- (0,.5);
  \draw (0,0) rectangle (1,1);
  \draw[dashed] (.5,.5) -- (0,0);
  \draw[densely dotted] (.5,1) -- (.5,.5);
  \draw[very thick] (0,.5) -- (.5,.5) -- (.5,0);
  \node () at (.75,.5) {$R$};
  \node () at (.25,.75) {$T$};
  \end{scope}

  \begin{scope}[shift={(9,3)}]
  \draw (0,0) rectangle (1,1);
  \draw[densely dotted] (.5,0) -- (.5,1);
  \node () at (.25,.5) {$T$};
  \node () at (.75,.5) {$R$};
  \end{scope}

  \node (m) at (8.75,2.75) {$m \neq \infty$};
  \node (n) at (4.25,2.75) {$n \neq \infty$};
  \path[draw,dashed] (m) -- (10.25,2.75) -- (10.25,1.25) -- (7.25,1.25) -- (7.25,2.75) -- (m);
  \path[draw,dashed] (n) -- (5.75,2.75) -- (5.75,4.25) -- (2.75,4.25) -- (2.75,2.75) -- (n);

\end{tikzpicture}
\end{center}
\caption{The alphabet of the grid SFT $Y$ in the proof of Lemma~\ref{lem:GridsExist}.}
\label{fig:GridAlph}
\end{figure}
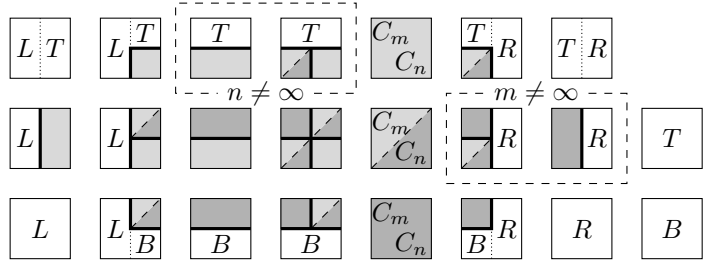

We are now ready to prove Theorem~\ref{thm:MainThm}. In its proof, we will modify the sofic shifts $X_{G(m,n)}$ by superimposing new symbols on top of their configurations with Lemma~\ref{lem:CountableSofics}. They provide a rigid geometric structure for the construction.

\begin{proof}[Proof of Theorem~\ref{thm:MainThm}]
$(1 \Rightarrow 2)$: Let $Y \subset S^{\Z^2}$ be finite, and denote $Y = \{y^1, \ldots, y^k\}$. The $y^i$ must all be periodic, and we let $p \in \N$ be a common horizontal and vertical period for all of them. Let $Z$ be a two-dimensional SFT and $\phi : Z \to X$ a surjective block map, and define the SFT $Z' \subset Z^k \times Y^k \times \{\hole,\#\}^{\Z^2}$ as follows. A configuration $z' = (z^1, \ldots, z^k, y^{n_1}, \ldots, y^{n_k}, t) \in Z^k \times Y^k \times \{\hole,\#\}^{\Z^2}$ is in $Z'$ if and only if
\begin{itemize}
\item $\{y^{n_1}, \ldots, y^{n_k}\} = \{y^1, \ldots, y^k\}$, which can be checked by $p \times p$ patterns,
\item if $t_{\vec v} = \hole$, then $\phi(z^i)_{\vec v} = y^{n_i}_{\vec v}$ for all $i \in [1,k]$, and
\item if $t_{\vec v} = \#$, then $\phi(z^i)_{\vec v} = \phi(z^j)_{\vec v}$ for all $i, j \in [1,k]$.
\end{itemize}
We then define the block map $\psi : Z' \to \hat S$ by
\[ \psi(z')_{\vec v} = \left\{ \begin{array}{ll}
	\hole, & \mbox{if~} t_{\vec v} = \hole, \\
	\phi(z^1)_{\vec v}, & \mbox{otherwise.}
\end{array} \right. \]
It is easily verified that $\psi(Z') = A(X,Y)$, and thus the extension is sofic.

$(2 \Rightarrow 3)$: Trivial.

$(3 \Rightarrow 1)$: Let $Y \subset S^{\Z^2}$ be infinite. Then for all $p \in \N$ there exists $y \in Y$ which is not $p$-periodic either in the horizontal or the vertical direction. If either condition cannot be satisfied for some $p$, then the other can be satisfied for all $p$. Thus we may assume that $Y$ has no common horizontal period. We also have $|S| \geq 2$. 

Our goal is now to construct a countably covered sofic shift $X \subset S^{\Z^2}$ such that $A(X, Y)$ is not sofic. We proceed by constructing a countably covered sofic shift $Z$ and a block map $\phi : Z \to S^{\Z^2}$ whose image we define as $X$. After this, it will be easy to show that the extension is not sofic, using Lemma~\ref{lem:Contexts}. The subshift $Z$ is a subset of $Z_1 \times Z_2 \times Z_3$, where $Z_1$ is the \emph{input layer}, $Z_2$ the \emph{computation layer}, and $Z_3$ the \emph{output layer}. We define the layers sequentially:
\begin{enumerate}
\item Define $Z_1$, a countably covered sofic shift.
\item Define the SFT $Z_2$, and a countable sofic shift $Z' \subset Z_1 \times Z_2$.
\item Define $Z_3$, a countably covered sofic shift, and the sofic shift $Z \subset Z' \times Z_3$.
\end{enumerate}


We begin with $Z_1$, which is defined by superimposing a label from $S \times \{0,1\}$ on each vertical column of the grid shift $X_{G(1,\infty)}$ given by Lemma~\ref{lem:GridsExist}. More formally, $Z_1 = Z'_1 \cap (X_{G(1,\infty)} \times (\{\square\} \cup S \times \{0,1\})^{\Z^2})$, where $Z'_1$ is the SFT with forbidden patterns $(t,\ell)$ where exactly one of $t \in \{(1,0), (1,1)\}$ and $\ell \neq \square$ holds, and all $1 \times 2$ patterns $\begin{smallmatrix} (t_1,\ell_1) \\ (t_2,\ell_2) \end{smallmatrix}$ where $\ell_1$ and $\ell_2$ differ from $\square$ and each other. The symbol $\square$ means `no label'. 
As all configurations of $X_{G(1,\infty)}$ contain only finitely many columns of $1$'s, $Z_1$ is countable, and thus a countably covered sofic shift by Lemma~\ref{lem:CountableSofics}.


We move on to the computation layer $Z_2$, whose design is very similar to that of Construction~1 in~\cite{SaTo13Pub}. Let ${*} \notin S \times \{0,1\}$ be a new symbol, and let $M = (k, k', (S \times \{0,1\}) \cup \{{*}\}, \Sigma, \delta, q_0, q_a, q_r)$ be a CMS whose functionality we define later; for now, we only require that its input alphabet is $\tilde S = S \times \{0,1\} \cup \{{*}\}$. The alphabet of $Z_2$ is $S_2 = \{ L, R, H \} \cup \left((\{0,1\} \cup \Gamma) \times \{0, 1\}^k \right)$, where $\Gamma = \delta \times (\tilde S \cup \{\#\}) \times \{{\Leftarrow}, {\leftarrow}, {\rightarrow\}}$.

First, every $2 \times 2$ pattern that contains $L$, $R$ or $H$, but is not of one of the forms
\[ \begin{array}{cccccccc}
\begin{smallmatrix} L & L \\ L & L \end{smallmatrix} &
\begin{smallmatrix} L & R \\ L & R \end{smallmatrix} &
\begin{smallmatrix} L & a \\ L & R \end{smallmatrix} &
\begin{smallmatrix} L & a \\ L & b \end{smallmatrix} &
\begin{smallmatrix} a & R \\ L & R \end{smallmatrix} &
\begin{smallmatrix} L & H \\ L & H \end{smallmatrix} &
\begin{smallmatrix} L & H \\ L & a \end{smallmatrix} &
\begin{smallmatrix} H & H \\ H & H \end{smallmatrix} \\[\medskipamount]

\begin{smallmatrix} R & R \\ R & R \end{smallmatrix} &
\begin{smallmatrix} a & R \\ R & R \end{smallmatrix} &
\begin{smallmatrix} a & b \\ c & R \end{smallmatrix} &
\begin{smallmatrix} H & R \\ R & R \end{smallmatrix} &
\begin{smallmatrix} H & H \\ H & R \end{smallmatrix} &
\begin{smallmatrix} H & H \\ a & R \end{smallmatrix} &
\begin{smallmatrix} H & H \\ a & b \end{smallmatrix} &
\end{array} \]
where $a, b, c \in (\{0,1\} \cup \Gamma) \times \{0, 1\}^k$, is forbidden. If a configuration $z \in Z_2$ contains only letters of $\{ L, R, H \}$, then it consists of two half planes, or only contains one letter. The $L$-region of $z$, if nonempty, is a left half plane, while the $R$-region is the intersection of right and southeast half planes, which may be the whole of $\Z^2$ or empty. The $H$-region, if nonempty, is the intersection of an upper half plane and the region without $L$-tiles or $R$-tiles. The rest of $z$ is called the \emph{computation cone}.

We enforce by $2 \times 1$ forbidden patterns that each horizontal row of the computation cone is of the form $1^\ell \gamma 0^{m - \ell - 1} \times \prod_{i=1}^k 1^{n_i} 0^{m - n_i}$, where $\gamma \in \Gamma$ is called the \emph{zig zag head} and $m \in \N$ is the length of the row. The product denotes a family of layers, not concatenation. The row corresponds to a computation step of $M$ where the $k$ counters have the respective values $n_1, \ldots, n_k$. The zig zag head remembers the transition of the step and the symbol read by the last counter. The length-1 row on the bottom of the cone is $(t_0, {*}, {\rightarrow}) \times \prod_{i=1}^k 0$, where $t_0 \in \delta$ is the transition $M$ takes from its initial ID $(q_0, w, 0, \ldots, 0)$ for an input word $w$ with $w_0 = {*}$.

A computation of $M$ is simulated on the rows of the cone, with time increasing upwards. The zig zag head starts from the left border of the cone in some state $(t, s, \rightarrow) \in \Gamma$, moving right at speed 2. When it hits the right border, it changes its state to $(t, s, \Leftarrow)$ and starts moving left at speed 1. If the transition $t$ involves updating a counter $i$, and the head steps on said counter, its value $n_i$ is updated. The head assumes the state $(t, s, \leftarrow)$, or $(t, r, \leftarrow)$ in case of the last counter, where $r \in \tilde S \cup \{\#\}$ is arbitrary. When the head reaches the left border in state $(t, s, \leftarrow)$, it chooses a new transition to execute, based on $t$, $s$ (interpreted as the letter under the last counter), and the set of counters with value $0$. Finally, the computation cone ends in a horizontal row of $H$-symbols precisely when the final state $q_f$ is reached. This simulation can be enforced by $2 \times 2$ forbidden patterns. The definition of $Z_2$ is complete. See Figure~\ref{fig:ComputationPic} for a visualization of the computation cone.


\input{ComputationPic2}

The subshift $Z' \subset Z_1 \times Z_2$ is defined by finitely many additional forbidden patterns. First, the width-1 bottom row of the computation cone in $Z_2$ can only be paired with the bottom left corner of the grid in $Z_1$, or the upper right corner of a pattern $\begin{smallmatrix} a & b \\ & c \end{smallmatrix}$ with $a = c = (0,0) \neq b$, and vice versa. Second, the symbol of $\tilde S \cup \{\#\}$ the zig zag head chooses when updating the last counter is determined by the symbol on the first layer at that coordinate. If this symbol has label $\ell \in S \times \{0, 1\}$, then the head gets $\ell$; if the symbol is $\#$, the head gets $\#$; otherwise, ${*}$. The upward infinite grid of $Z_1$ thus encodes an input string for the simulation of $M$.

\begin{claim}
If $M$ is deterministic, then $Z'$ is a countably covered sofic shift. Also, if a configuration $z \in Z'$ contains the bottom row of a grid of width $(n-1) n + 1$ whose labels encode a word $w \in (S \times \{0,1\})^n$, and if $M(w_0 {*}^{n-1} \cdots {*}^{n-1} w_{n-1}) = (n_1, \ldots, n_{k'}) \in \N^{k'}$, then $z$ contains a finite computation cone whose top row has the output counter values $n_1, \ldots, n_{k'}$.
\end{claim}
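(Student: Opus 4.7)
The plan is to treat the two statements separately, with the first reducing to a rigidity argument and the second to a correctness induction on the simulation.

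For the first statement, I would begin by showing that $Z'$ is sofic: if $\tilde Z_1$ is a countable SFT cover of $Z_1$ from the definition of countably covered, then the finitely many forbidden patterns carving $Z'$ out of $Z_1 \times Z_2$ pull back to finitely many forbidden patterns on the SFT $\tilde Z_1 \times Z_2$, yielding an SFT cover of $Z'$. By Lemma~\ref{lem:CountableSofics} it then suffices to show that $Z'$ itself is countable. To do this, I would argue that a configuration $(z_1, z_2) \in Z'$ is essentially determined by $z_1$: the interface forbidden patterns force the width-$1$ bottom row of every computation cone in $z_2$ to coincide with a bottom-left corner of a grid in $z_1$ (or the analogous labelled pattern); determinism of $M$ together with the input word read from the first-layer labels of $z_1$ then uniquely determines the interior of the cone; and the surrounding $L$, $R$, and $H$ regions are forced by the position and shape of the cone. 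Since $Z_1$ is countable, so is $Z'$.

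For the second statement, assume that $z = (z_1, z_2) \in Z'$ contains the bottom row of a grid of width $(n-1)n + 1$ whose labels encode $w \in (S \times \{0,1\})^n$. From the construction of $X_{G(1,\infty)}$ with $k = n$, this grid has its first-layer $1$-columns at horizontal offsets $0, n, 2n, \ldots, (n-1)n$, carrying the labels $w_0, \ldots, w_{n-1}$. The interface forbidden patterns of $Z'$ force the bottom of a computation cone in $z_2$ to coincide with the grid's bottom-left corner, and so the length-$1$ bottom row of the cone encodes the initial ID of $M$ on an input string of length $(n-1)n + 1$. I would then argue by induction on simulation steps that each round trip of the zig-zag head corresponds to exactly one transition of $M$: the $2 \times 2$ forbidden patterns of $Z_2$ enforce the counter updates and zero-tests, and when the last counter lands on offset $jn$ the symbol read from the first layer is exactly $w_j$, while at other positions inside the grid it is ${*}$ and beyond it is $\#$. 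The simulated input is thus precisely $w_0 {*}^{n-1} \cdots {*}^{n-1} w_{n-1}$. By hypothesis, $M$ halts in $q_a$ after finitely many steps, at which point the $H$-tile rule caps off the cone; the top cone row then records the counter values in unary, and its first $k'$ layers display the claimed output $n_1, \ldots, n_{k'}$.

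The main obstacle is the inductive simulation step in the second paragraph, namely faithfully translating the geometry of the zig-zag cone---its asymmetric leftward and rightward speeds, the handling of the end-of-input marker $\#$ when the last counter overshoots $(n-1)n$, and the branching on counter zero-tests---into single CMS transitions. Once this bijection between cone rows and successive IDs of $M$ is in place, the second statement is immediate, and countability in the first paragraph is a clean corollary of the same rigidity together with Lemma~\ref{lem:CountableSofics}.
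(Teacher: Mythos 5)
Your proof follows the paper's argument almost exactly: countability of $Z'$ combined with Lemma~\ref{lem:CountableSofics} for the first statement, and an induction identifying each back-and-forth sweep of the zig-zag head with one transition of $M$ for the second. The one place where your countability argument is incomplete is the claim that $(z_1,z_2)$ is (essentially) determined by $z_1$: the justification you give only covers the case where $z_1$ contains the bottom-left corner of a grid, so that $z_2$ is forced to contain the bottom row of a cone whose interior is then pinned down by the determinism of $M$. When $z_1$ contains no such corner, $z_2$ need not contain a cone bottom at all---it may consist only of $L$-, $R$- and $H$-regions meeting at arbitrary positions, or contain an infinite cone with no bottom row in which the head makes at most one infinite sweep with arbitrary counter contents---and such $z_2$ are not determined by $z_1$. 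The paper treats this case separately, observing that the positions of the regions, of the counters, and of the head still leave only countably many choices, so the fiber over each $z_1$ is countable rather than a singleton. With that case added, your argument coincides with the paper's.
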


\begin{proof}[Proof of Claim]
For the first claim, it suffices to show that $Z'$ is countable, as the rest follows from Lemma~\ref{lem:CountableSofics}. For every $z \in Z_1$, we show that the number of $z' \in Z_2$ such that $(z, z') \in Z'$ is countable. If $z$ does not contain the bottom left corner of a grid, then $z'$ does not contain  the bottom row of a computation cone. We then have countably many choices for the positions of the different regions of $z'$, the positions of the counters, and the position and state of the zig zag head, which can make at most one infinite back-and-forth sweep. If $z$ does contain the bottom left corner of a grid, then $z'$ is completely determined, since the bottom row of the computation cone must be placed on this tile, and the simulation of $M$ is deterministic.

The latter claim follows from the fact that each sweep of the zig zag head simulates one computation step of $M$, and a simple induction argument.
\end{proof}


We now specify the CMS $M$, and for that, let $g : \N \to \N$ be any computable function with $m \geq \binom{g(m) + 2}{g(m)}$ for all $m \geq 3$, $g(m) \rightarrow \infty$ and $g(m)/m \rightarrow 0$, for example $m \mapsto \lfloor \sqrt{m} \rfloor - 1$. The machine has four output counters, and on an input word $w$, it behaves as follows. First, it checks that $w = u_0 {*}^{n-1} \cdots {*}^{n-1} u_{n-1}$ for some $n \in \N$ and $u = (u^{(1)}, u^{(2)}) \in (S \times \{0, 1\})^n$, rejecting if not. It then checks that $n = m^2 + 2r_{m^2}$ for some $m \in \N$, again rejecting if not, where $r_{m^2} = m^{2|S|^{2m^2+1}}$. Using the radius $r_{m^2}$ for the $m^2$-marker map $f_{m^2}$, it then computes the image $v = F_{m^2}(u^{(1)}) \in \{0,1\}^{m^2}$ of $u^{(1)}$ under the local function of $f_{m^2}$. By the properties of $f_{m^2}$ given in Lemma~\ref{lem:Marker}, $v$ contains at most one $1$-symbol.

Next, $M$ checks whether $v_i = 1$ for some (then necessarily unique) $i \in [0, m^2-1]$. If this is the case, then denoting $c = u^{(2)}_{r_{m^2} + i} \in \{0,1\}$, the machine $M$ outputs the four numbers $m g(m) + 1$, $g(m)$, $a g(m) + c$ and $b g(m)$, where $a, b \in [0, m-1]$ are such that $i = a + m b$. Otherwise, $M$ outputs $m g(m) + 1$, $g(m)$, $0$ and $0$. 


We then define the output layer $Z_3$, which is similar in structure to $Z_1$. It is defined by superimposing new symbols on $X_{G(2,2)}$ as follows:
\begin{enumerate}
\item Each vertical column of $1$'s gets a label from $\{\square, 0, 1\}$ and each horizontal row one from $\{\square, \$\}$. A row or column whose label is not $\square$ is called \emph{special}.
\item Every intersection of a row and a column (that is, every symbol $(1,1)$) gets a label from $\{0, 1\}$. These labels are called the \emph{elements} of the grid that contains them. The intersection of a special row and a special column has the same label as the column, and such elements are called \emph{marked}.
\item Every coordinate in a grid (every symbol except $\#$) gets a label from $\{ \tikz[scale=.4,baseline=1]{\draw[fill=black!15] (0,0) rectangle (1,1);}, \tikz[scale=.4,baseline=1]{\fill[black!15] (1,0) -- (0,1) -- (0,0); \draw (0,0) rectangle (1,1); \draw[dashed] (1,0) -- (0,1);}, \tikz[scale=.4,baseline=1]{\draw (0,0) rectangle (1,1);}\}$, with the obvious $2 \times 2$ forbidden patterns. The leftmost coordinate of a special horizontal row must have label $\tikz[scale=.4,baseline=1]{\fill[black!15] (1,0) -- (0,1) -- (0,0); \draw (0,0) rectangle (1,1); \draw[dashed] (1,0) -- (0,1);}$, and vice versa.
\end{enumerate}
Because of the diagonal signal of the third item, if a configuration of $Z_3$ contains a finite grid, then at most one of its rows can be special. Since every configuration of $X_{G(2,2)}$ contains only finitely many rows and columns of $1$'s, $Z_3$ is countable, and thus a countably covered sofic shift by Lemma~\ref{lem:CountableSofics}.

We define the subshift $Z \subset Z' \times Z_3$. First, every symbol $(a,b)$, where the $Z_2$-layer of $a$ is not $H$ and $b$ is not $\#$, is forbidden, so the grid of $Z_3$ lies on the $H$-region of $Z_2$. Recall that the four output values of the simulated CMS $M$ are of the form $m g(m) + 1$, $g(m)$, $a g(m) + c$ and $b g(m)$, where $a, b \in [0, m-1]$ and $c \in \{0, 1\}$. For a configuration $z \in Z$ and $(i,j) \in \Z^2$, we require that $z_{(i,j)}$ is on the bottom row of the $Z_3$-grid if and only if $z_{(i,j-1)}$ contains a $1$ on the sublayer of the first counter of $M$. Thus, the output $m g(m) + 1$ of the first counter is exactly the width of the $Z_3$-grid, if either (and thus both) exists. Similarly, we force the second output value $g(m)$ to be exactly the width of a grid square, by stating that if $z_{(i,j)}$ contains the bottom-most $1$ of a column and $z_{(i-1,j)}$ is also in the grid, then $z_{(i-1,j-1)} z_{(i,j-1)}$ contains $1 0$ or $0 0$ on the sublayer of the second counter. A column whose bottom coordinate is at $(i,j)$ is special if and only if $z_{(i-1,j-1)} z_{(i,j-1)} z_{(i+1,j-1)}$ contains either $1 0 0$ or $1 1 0$ on the sublayer of the third counter, the middle symbol being the label of the column. Thus the $a$'th column from the left is special, with label $c$. Finally, the position of $\tikz[scale=.4,baseline=1]{\fill[black!15] (1,0) -- (0,1) -- (0,0); \draw (0,0) rectangle (1,1); \draw[dashed] (1,0) -- (0,1);}$ on the bottom row of the grid must coincide with the fourth output counter, so the $b$'th row from the bottom is special.

See Figure~\ref{fig:AllLayers} for a visualization of a configuration of $Z$. Since the position of the output grid is determined by the computation layer, $Z$ is countable, and thus a countably covered sofic shift by Lemma~\ref{lem:CountableSofics} and the fact that countably covered sofic shifts are closed under direct product. The intuition for $Z$ is the following. A finite $Z_1$-grid (which contains $m^2 + 2r_{m^2}$ columns for some $m \in \N$) and $Z_3$-grid (which contains $m$ rows and columns with distance $g(m)$) encode two sets $A, B \subset [0,m-1]^2$ in a configuration that contains both of them. The set $A$ is given by the binary word $v_{[r_{m^2}, r_{m^2} + m^2 - 1]}$ of length $m^2$, where $v \in \{0,1\}^{m^2 + 2r_{m^2}}$ consists of the binary labels of the $Z_1$-columns, while $B$ is encoded in the elements of the $Z_3$-grid. We can `mark' one of the elements of $[0,m-1]^2$ by selecting the $S$-labels of the $Z_1$-columns so that the marker map computed by $M$ places a $1$ at that exact coordinate. The output counters of $M$ force the corresponding element of the $Z_3$-grid to be marked, and then $A$ and $B$ must both contain or both lack that element.

\begin{figure}
\begin{center}
\begin{tikzpicture}[scale=1.1]

\fill[black!15] (1.125,8) -- (0,8) -- (0,9.125);
\draw[dotted] (1.125,8) -- (0,9.125);

\draw[thick] (0,2) -- (0,10);
\draw[thick] (0,2) -- (4,6);
\draw[thick] (0,8) -- (4,8);
\draw[thick] (0,9.5) -- (1.5,9.5) -- (1.5,8);
\draw[thick] (2,2) -- (2,4);

\draw[thick] (0,2) -- (2,2);

\foreach \x in {0.5,1,1.5}{
	\draw[densely dotted] (\x,2) -- (\x,8);
}
\draw[densely dotted] (2,4) -- (2,8);

\draw[dashed]
(0,2) -- (.25,2.5) -- (.75,3) -- (.75,3.5) --
(.75,4) -- (1,4.5) -- (1.5,5) -- (1.85,5.5) --
(2,6) -- (2.25,6.5) -- (2,7) -- (1.5,7.5) -- (1.125,8);

\draw[dashed]
(0,3.5) -- (.25,4) -- (.4,4.5) -- (.4,5) -- (.4,5.5) --
(.75,6) -- (.75,6.5) -- (1.25,7) -- (1.25,7.5) -- (1.5,8);

\draw[dashed]
(0,4) -- (.3,4.5) -- (.6,5) -- (.6,5.5) --
(.6,6) -- (.3,6.5) -- (.6,7) -- (.75,7.5) -- (.75,8);

\draw[dashed]
(0,6.5) -- (.2,7) -- (.375,7.5) -- (.375,8);


\foreach \a in {.375,.75}{
	\draw (0,\a+8) -- (1.5,\a+8);
}
\draw[densely dotted] (0,9.125) -- (1.5,9.125);
\foreach \a in {.375,1.125}{
	\draw (\a,8) -- (\a,9.5);
}
\draw[densely dotted] (.75,8) -- (.75,9.5);

\foreach \x in {0,.375,.75,1.125,1.5}{
	\foreach \y in {0,.375,.75,1.125,1.5}{
		\draw[fill=white] (\x,\y+8) circle (.05);
	}
}

\fill (.75,9.125) circle (.05);


\draw[decorate,decoration={zigzag,amplitude=1.5,segment length=7.1}] (4,1.7) -- (4,10);

\foreach \x in {0,0.5,1,1.5,2}{
	\draw[fill=white] (\x+5,1.95) circle (.05);
	\draw[fill=black!20] (\x+5,1.85) circle (.05);
	\draw[fill=black] (\x+5,1.75) circle (.05);
}

\draw[fill=black] (4.95,7.85) rectangle (6.55,7.925);


\foreach \x in {0,.375,.75,1.125,1.5}{
	\foreach \y in {0,.375,.75,1.125,1.5}{
		\draw[fill=white] (\x+5,\y+8) circle (.05);
	}
}

\end{tikzpicture}
\end{center}
\caption{Left: a schematic diagram of a configuration of $Z$ containing a finite $Z_3$-grid, not drawn to scale. The dotted lines represent the columns of the $Z_1$-grid, whose labels are read by the CMS running inside the triangular computation cone. The dashed lines represent the output counters. The computation ends at the base of the $Z_3$-grid, whose elements are denoted by small circles, of which the filled one is marked. Right: its $\Phi$-image. The white, gray and black circles represent symbols picked from $\{0,1\}$, $S$ and $\{1\}$, respectively. The horizontal bar is a row of symbols 1.}
\label{fig:AllLayers}
\end{figure}
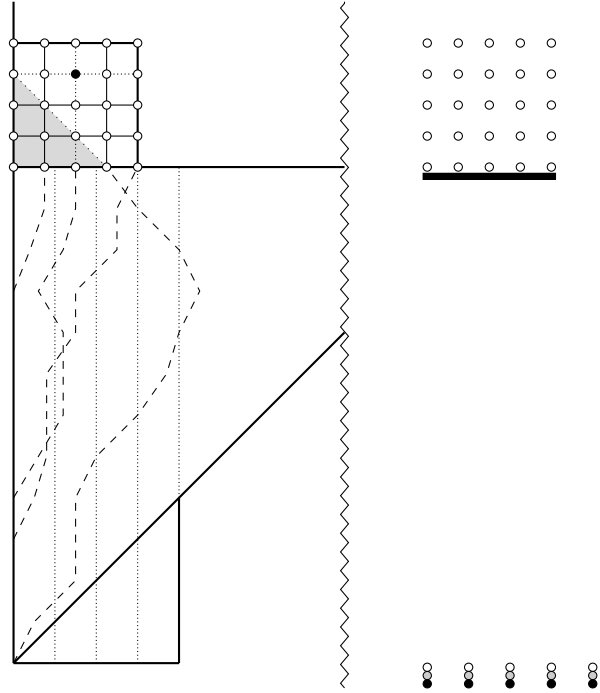


Finally, define the block map $\phi : Z \to S^{\Z^2}$ with neighborhood $\{0\} \times \{0,1,2,3\} \subset \Z^2$ and local function $\Phi$ as follows. Without loss of generality, assume $0, 1 \in S$. Let $c_1, c_2, c_3, c_4$ be symbols of the alphabet of $Z$. First, if the $Z_1$-layer of $c_4$ is $\#$, then
\[
\Phi \left(\begin{smallmatrix} c_1 \\ c_2 \\ c_3 \\ c_4 \end{smallmatrix}\right) = \left\{ \begin{array}{rl}
b, & \mbox{if the $Z_1$-layer of $c_3$ has label $(a,b) \in S \times \{0,1\}$,} \\
a, & \mbox{if the $Z_1$-layer of $c_3$ is $\#$ and that of $c_2$ has label $(a,b)$,} \\
1, & \mbox{if the $Z_1$-layer of $c_2$ is $\#$ and that of $c_1$ is not,} \\
0, & \mbox{otherwise.}
\end{array} \right.
\]
Second, if $c_4$ is an element of the $Z_3$-grid, then the image of $\Phi$ at that coordinate is its label (in $\{0, 1\}$). Finally, if the $Z_2$-label of $c_4$ and the $Z_3$-label of $c_3$ are not $\#$, then the $\Phi$-image is $1$. Everything else is mapped to $0$.

Let $m \geq 3$, $u \in S^{m^2 + r_{m^2}}$, $v \in \{0,1\}^{m^2 + r_{m^2}}$ and $B \subset [0, m-1]^2$ be such that if the image $F_{m^2}(u)$ of the marker map contains a $1$ at some (necessarily unique) coordinate $i = a + b m$, then $v_{r_{m^2} + i} = 1$ if and only if $(a,b) \in B$. Then there exists $x = x^{m,u,v,B} \in X$ as follows. For every $(i,j) \in B$, we have $x_{(g(m) i, g(m) j)} = 1$ (given by the elements of the $Z_3$-grid). We also have $x_{(i,-1)} = 1$ for all $i \in [0, m g(m)]$. For some $n < 0$, $x$ has a horizontal row of $1$'s of length $m(m^2 + 2 r_{m^2}) + 1$ to the right of $(0,n) \in \Z^2$, above which are the two rows $u_0 0^{m-1} u_1 0^{m-1} \cdots 0^{m-1} u_{m^2 + 2 r_{m^2} - 1}$ and $v_0 0^{m-1} v_1 0^{m-1} \cdots 0^{m-1} v_{m^2 + 2 r_{m^2} - 1}$ of the same length (given by the bottom rows of the $Z_1$-grid). For all other $\vec v \in \Z^2$, we have $x_{\vec v}$ = 0. Configurations of $X$ which are not translates of some $x^{m,u,v,B}$ do not contain such finite rows of $1$'s.



We now show that the extension $A(X,Y) \in \hat S^{\Z^2}$ is not sofic. Let $x = x^{m,u,v,B} \in X$ and $n < 0$ be as above, and denote by $A \subset [0,m-1]^2$ the set encoded by $v_{[r_{m^2}, r_{m^2} + m^2 - 1]}$. Construct a new configuration $\hat x \in \hat S^{\Z^2}$ by replacing those symbols in $x$ that encode $u$ with $\hole$-symbols. We claim that $\hat x \in A(X,Y)$ holds if and only if $A = B$. Suppose first $A = B$, and let $y \in Y$ be arbitrary. If the $F_{m^2}$-image of the word $y_{(0,1-n)} y_{(m,1-n)} \cdots y_{(m(m^2 + 2 r_{m^2} - 1),1-n)} \in S^{m^2 + 2 r_{m^2}}$ substituted to the $\hole$-symbols of $\hat x$ contains a $1$, then the respective coordinate of $[0,m-1]^2$ is in $A$ if and only if it is in $B$, and we have $\hat x^{(y)} \in X$; otherwise, this holds automatically.

Conversely, suppose we have $A \neq B$, so there exists $i = a + m b \in [0,m^2-1]$ such that $v_{r_{m^2} + i} = 1$ but $(a,b) \notin B$, or vice versa. Denote $k = m^2 + 2r_{m^2}$ and $D = \{ (0,1), (k,1), \ldots, (k(k-1),1) \} \subset \Z^2$. Since $Y$ is not horizontally periodic, there exists a pattern $P \in \B_D(Y)$ that, when interpreted as a word $u \in S^k$, satisfies $F_{m^2}(u)_i = 1$ for the marker map $F_{m^2}$. We now have $\hat x^{(y)} \notin X$, as otherwise the machine $M$ would mark the column $a$ and row $b$ in its $\phi$-preimage, and we would have $(a,b) \in B$ because $(a,b) \in A$, a contradiction. Thus $\hat x \notin A(X,Y)$, and we have shown that $\hat x \in A(X,Y)$ if and only if $A = B$.

Suppose for a contradiction that $A(X,Y)$ is sofic, and let $C > 0$ be given for it by Lemma~\ref{lem:Contexts}. Let $B \subset [0,m-1]^2$, and let $\hat x^{m, B} = \hat x^{m,u,v,B}$, where $u \in S^{m^2 + r_{m^2}}$ is arbitrary, and $v \in \{0,1\}^{m^2 + r_{m^2}}$ encodes the set $B$. There are $2^{m^2}$ such configurations for a given $m \in \N$, and $C^{mg(m)} < 2^{m^2}$ holds when $m$ is large enough. Then there are $B \neq B' \subset [0,m-1]^2$ such that, with the notation of Lemma~\ref{lem:Contexts}, we have $c(\hat x^{m,B}, \hat x^{m,B'}, mg(m)+1) \in A(X,Y)$. But this configuration is exactly $\hat x^{m,u,v',B}$ where $v' \in \{0,1\}^{m^2 + r_{m^2}}$ encodes the set $B'$, a contradiction since $B \neq B'$.
\end{proof}

\section{Further Discussion}

In this section, we discuss some variants and strengthenings of Theorem~\ref{thm:MainThm}, and present a few open problems. We begin by defining the notion of downward determinism for two-dimensional subshifts.

\begin{definition}
Denote by $H$ the upper half-plane $\{ (a,b) \in \Z^2 \;|\; b > 0 \}$. A two-dimensional subshift $X \subset S^{\Z^2}$ is \emph{downward deterministic} if $x_H = y_H$ for two configurations $x, y \in X$ implies $x_{\vec 0} = y_{\vec 0}$. By compactness, this implies the existence of $n \in \N$ such that already $x_{[-n,n] \times [1,n]} = y_{[-n,n] \times [1,n]}$ implies $x_{\vec 0} = y_{\vec 0}$.
\end{definition}

The subshift $X$ constructed in the proof of Theorem~\ref{thm:MainThm} is not downward deterministic, which leads us to studying the extensions of downward deterministic subshifts. The next result may seem surprising, but the proof is elementary.

\begin{proposition}
Let $X \subset S^{\Z^2}$ be a downward deterministic subshift, and let $Y \subset S^{\Z^2}$ be any subshift. If $A(X,Y) \neq X$, then $Y$ is downward deterministic, and if $Y$ contains at least $2$ points, then $A(X,Y)$ is downward deterministic.
\end{proposition}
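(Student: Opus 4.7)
The plan is to unfold the universal extension: assuming $Y$ is nonempty, $x \in A(X,Y)$ holds exactly when $x^{(y)} \in X$ for every $y \in Y$, and the trivial containment $X \subseteq A(X,Y)$ holds because every $x \in X$ is hole-free and so $x^{(y)} = x$ for all $y$. Consequently $A(X,Y) \neq X$ precisely when some $x \in A(X,Y)$ carries at least one $\hole$-symbol, and by shift invariance I may translate so that this hole sits at $\vec 0$. The reusable observation used throughout is this: if $x, x' \in \hat S^{\Z^2}$ satisfy $x_H = x'_H$ and $y, y' \in S^{\Z^2}$ satisfy $y_H = y'_H$, then $x^{(y)}_H = x'^{(y')}_H$, because at each $\vec v \in H$ the values $x_{\vec v} = x'_{\vec v}$ are either both in $S$ or both equal $\hole$, and in the latter case the patched values both coincide with the common value $y_{\vec v} = y'_{\vec v}$.

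For the first implication I argue contrapositively. Assume $Y$ is not downward deterministic and pick $y, y' \in Y$ with $y_H = y'_H$ but $y_{\vec 0} \neq y'_{\vec 0}$. Given any $x \in A(X,Y)$ with $x_{\vec 0} = \hole$, both $x^{(y)}$ and $x^{(y')}$ lie in $X$ and agree on $H$ by the observation. Downward determinism of $X$ then forces $x^{(y)}_{\vec 0} = x^{(y')}_{\vec 0}$, which unpacks to $y_{\vec 0} = y'_{\vec 0}$, a contradiction. Hence no configuration of $A(X,Y)$ carries a hole, yielding $A(X,Y) \subseteq X$, and combined with the trivial inclusion we get $A(X,Y) = X$.

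For the second implication, suppose $Y$ has at least two points. Since $Y$ is shift invariant, the set $\{y_{\vec 0} : y \in Y\}$ likewise has at least two elements. Take $x, x' \in A(X,Y)$ with $x_H = x'_H$; the goal is $x_{\vec 0} = x'_{\vec 0}$. For every $y \in Y$, both $x^{(y)}$ and $x'^{(y)}$ belong to $X$ and agree on $H$ by the observation, so downward determinism of $X$ gives $x^{(y)}_{\vec 0} = x'^{(y)}_{\vec 0}$. Split on $x_{\vec 0}$ and $x'_{\vec 0}$: if both lie in $S$ this reads $x_{\vec 0} = x'_{\vec 0}$; if both equal $\hole$ they are already equal; and if exactly one is $\hole$, say $x_{\vec 0} = \hole$ and $x'_{\vec 0} \in S$, the identity becomes $y_{\vec 0} = x'_{\vec 0}$ for every $y \in Y$, contradicting the two-values property.

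The whole argument is elementary and I do not expect any real obstacle; the only step that needs care is the $H$-agreement observation, which is the workhorse of both halves and relies on the fact that $x_H = x'_H$ also forces the hole-patterns of $x$ and $x'$ on $H$ to coincide.
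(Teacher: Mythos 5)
Your proof is correct and takes essentially the same approach as the paper: substitute elements of $Y$ into the holes, compare the resulting points of $X$ above the origin, and invoke downward determinism of $X$, with the same case analysis on whether the origin symbols are $\hole$ and the same use of two distinct values of $y_{\vec 0}$ over $y \in Y$. The only cosmetic differences are that you work with full configurations and the half-plane $H$ where the paper uses the finite window $[-n,n]\times[1,n]$ supplied by compactness, and that you phrase the first implication contrapositively.
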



\begin{proof}
Let $n \in \N$ be such that the rectangle $x_{[-n,n] \times [1,n]}$ determines $x_{\vec 0}$ for all $x \in X$, and denote $D = [-n,n] \times [1,n] \cup \{\vec 0\}$. Suppose first that $A(X,Y) \neq X$, and let $P \in \B_D(A(X,Y))$ be such that $P_{\vec 0} = \hole$. Let $Q, R \in \B_D(Y)$ be such that $Q_{[-n,n] \times [1,n]} = R_{[-n,n] \times [1,n]}$, and consider the substitutions $P^{(Q)}, P^{(R)} \in \B_D(X)$. We have $P^{(Q)}_{[-n,n] \times [1,n]} = P^{(R)}_{[-n,n] \times [1,n]}$, which implies  $P^{(Q)}_{\vec 0} = P^{(R)}_{\vec 0}$, and thus $Q_{\vec 0} = R_{\vec 0}$. But this means that $Y$ is downward deterministic.

Suppose then that $Y$ is nontrivial, and let $P, Q \in \B_D(A(X,Y))$ be such that $P_{[-n,n] \times [1,n]} = Q_{[-n,n] \times [1,n]}$. Now, if $P_{\vec 0} \in S$, let $R \in \B_D(Y)$ be such that $R_{\vec 0} \neq P_{\vec 0}$. Such an $R$ exists, since at least two distinct letters occur in $Y$. Since $P^{(R)}, Q^{(R)} \in \B(X)$ agree on the set $[-n,n] \times [1,n]$, we must have $Q_{\vec 0} = P_{\vec 0}$. Symmetrically, if $Q_{\vec 0} \in S$, then $P_{\vec 0} = Q_{\vec 0}$, and thus $P_{\vec 0} = \hole$ if and only if $Q_{\vec 0} = \hole$. Thus $A(X,Y)$ is downward deterministic.
\end{proof}

The subshifts constructed in Proposition~\ref{prop:CompuNonsofic} are vertically constant, thus downward deterministic, so even for downward deterministic sofic shifts $X, Y \subset S^{\Z^2}$, the universal extension $A(X,Y)$ need not be sofic. However, the construction relies on $Y$ being computationally difficult, and not much can be said if $Y$ is recursive. Namely, by the previous result, if there are downward deterministic sofic shifts $X$ and $Y$, with $Y$ recursive, such that $A(X,Y)$ is not sofic, then $A(X,Y)$ is a downward deterministic $\Pi^0_1$ subshift which is not sofic, and it is currently unknown whether such an object exists. In particular, Lemma~\ref{lem:Contexts} cannot be applied, since all downward deterministic subshifts also satisfy its conclusion. In \cite{De06}, it was proved that every multidimensional sofic shift $X$ has a collection of subsystems whose entropies are dense in the interval $[0, h(X)]$. But since downward deterministic subshifts have zero entropy, they always satisfy this condition, too.

\begin{problem}
Let $X, Y \subset S^{\Z^2}$ be downward deterministic sofic shifts, and let $\B(Y)$ be recursive. Is $A(X,Y)$ necessarily sofic?
\end{problem}

Theorem~\ref{thm:MainThm} also has the following dual problem.

\begin{problem}
For a given sofic shift $X \subset S^{\Z^2}$, does there exist a (sofic/recursive) subshift $Y \subset S^{\Z^d}$ such that $A(X,Y)$ is not sofic?
\end{problem}

By Proposition~\ref{prop:SFTs}, the answer is negative in both cases if $X$ is an SFT. Proposition~\ref{prop:CompuNonsofic} and Theorem~\ref{thm:MainThm} show that there exist some particular and quite intricate $X$ for which the answer is positive in both cases.

Finally, recall from Proposition~\ref{prop:Soficness} that the existential extension of every one-dimensional SFT by another SFT is sofic, but Example~\ref{ex:1DSFT} showed that it may not be an SFT, even when extending by a full shift. This raises the following question.

\begin{problem}
Let $X, Y \subset S^\Z$ be SFTs. When is $E(X,Y)$ an SFT?
\end{problem}

\section*{Acknowledgments}

I am thankful to Brian Marcus, Michael Schraudner and Ronnie Pavlov for reading through several early versions of this paper, Ville Salo for lengthy discussions on the subject, and the anonymous referee for their valuable comments that greatly helped to improve the readability of this paper.

\bibliographystyle{amsplain}
\bibliography{../../../bib/bib}{}

\end{document}